\documentclass[12pt,twoside]{amsart}
\usepackage{amsmath,amssymb,amsfonts,amsthm,amsopn}
\usepackage{graphics}


\setlength{\textwidth}{13,5cm}
\setlength{\textheight}{19.5cm}

\setlength{\oddsidemargin}{0pt}
\setlength{\evensidemargin}{0pt}
\setlength{\textwidth}{148 mm}   



\newcommand{\modsp}{modulation space}

\def\epsilon{\varepsilon}
\def\a{\alpha}

\newtheorem{theorem}{Theorem}[section]
\newtheorem{lemma}[theorem]{Lemma}
\newtheorem{corollary}[theorem]{Corollary}
\newtheorem{proposition}[theorem]{Proposition}
\newtheorem{definition}[theorem]{Definition}

\newcommand{\beqa}{\begin{eqnarray*}}
\newcommand{\eeqa}{\end{eqnarray*}}

\newcommand{\field}[1]{\mathbb{#1}}
\newcommand{\bR}{\field{R}}        
\newcommand{\bN}{\field{N}}        
        %
        %



\def\la{\lambda}

\def\eps{\epsilon}


\def\cS{\mathcal{S}}

\def\cM{\mathcal{M}}

\def\cC{\mathcal{C}}

\def\Fu{\mathfrak{F}}

\def\rd{\bR^d}

\def\rdd{{\bR^{2d}}}

\def\intrd{\int_{\rd}}

\def\R{\right)}

\def\<{\left<}
\def\>{\right>}

\def\mv1{M_v^1}

\def\mn{(m,n)}
\def\mn'{(m',n')}


\hyphenation{Cara-theo-do-ry}
\hyphenation{Dau-be-chies}
\hyphenation{Barg-mann}
\hyphenation{dis-tri-bu-ti-ons}
\hyphenation{pseu-do-dif-fe-ren-tial}
\hyphenation{ortho-normal}



\def\o{\eta}

\def\R{\mathbb{R}}
\def\Ren{\mathbb{R}^d}
\def\Renn{\mathbb{R}^{2d}}

\def\Fur{\mathcal{F}}

\def\Sn2{S_{2}(L^{2}(\Ren))}
\def\S1{S_{1}(L^{2}(\Ren))}
\def\sig00{\sigma_{0,0}}

\def\la{\langle}
\def\ra{\rangle}




\begin{document}
\begin{abstract}
We consider a class of linear Schr\"odinger equations in $\rd$, with analytic symbols. We prove a global-in-time integral representation for the corresponding propagator as a generalized Gabor multiplier with a window analytic and decaying exponentially at infinity, which is transported by the Hamiltonian flow. We then provide three applications of the above result: the exponential sparsity  in phase space of the corresponding propagator with respect to Gabor wave packets, a wave packet characterization of Fourier integral operators with analytic phases and symbols, and the propagation of analytic singularities.     
\end{abstract}

\title[Wave packet analysis of Schr\"odinger equations]{Wave packet analysis of Schr\"odinger equations in analytic function spaces}

\author{Elena Cordero,  Fabio Nicola  and Luigi Rodino}
\address{Dipartimento di Matematica,
Universit\`a di Torino, via Carlo Alberto 10, 10123 Torino, Italy}
\address{Dipartimento di Scienze Matematiche,
Politecnico di Torino, corso Duca degli Abruzzi 24, 10129 Torino,
Italy}
\address{Dipartimento di Matematica,
Universit\`a di Torino, via Carlo Alberto 10, 10123 Torino, Italy}

\email{elena.cordero@unito.it}
\email{fabio.nicola@polito.it}
\email{luigi.rodino@unito.it}

\subjclass{}

\subjclass[2010]{35Q41, 35A20, 35S05, 35C15, 42C15}
\keywords{Fourier integral operators, Schr\"odinger equation, analytic functions, wave packet analysis, Gabor analysis, Galfand-Shilov spaces}
\maketitle

\section{Introduction}
Consider the Cauchy problem 
\begin{equation}\label{equazione}
\begin{cases}
D_t u+ a^w(t,x,D)u=0\\
u(0)=u_0,
\end{cases}
\end{equation}
where $D_t=-i\partial_t$ and the real-valued symbol $a(t,x,\xi)$ is continuous in $t$ in some interval $[0,T]$ and smooth with respect to $x,\xi\in\rd$, satisfying
\begin{equation}\label{wtat}
|\partial^\alpha_z a(t,z)|\leq C_\alpha,\quad |\alpha|\geq 2,\ z\in \rdd,\ t\in[0,T]
\end{equation}
(Weyl quantization is understood). As a typical model one can consider the case when $a(t,x,\xi)$ is a quadratic form in $x,\xi$, which gives rise to metaplectic operators. Equations of this type turned out to be important in spectral theory \cite{helffer84,helffer-rob1} and in regularization issues for equations with rough coefficients \cite{tatarustr}. Depending on the applications, several additional conditions are imposed on the symbol $a$ and its derivatives. In any case a fundamental problem is to obtain some integral representation of the propagator, from which one can then deduce estimates for the solutions. In principle, {\it for small time} one expects the propagator to be represented by a Fourier integral operator (FIO) 
\begin{equation}\label{classicalFIO}
S(t,0)f=(2\pi)^{-d}\int_{\rd} e^{i\Phi(t,x,\eta)}\sigma(t,x,\eta)\, \widehat{f}{(\eta)}\,d\eta
\end{equation}
with a smooth  real-valued phase $\Phi(t,x,\xi)$, having quadratic growth with respect to the variables $x,\xi$, and a symbol $\sigma(t,\cdot)\in S^0_{0,0}$, i.e.\ bounded together with its derivatives. This was first proved in \cite{chazarain,helffer84,helffer-rob1} for a class of symbols $a(t,x,\xi)$ of polyhomogeneous type, i.e.\ with an asymptotic expansion in homogeneous terms in $z=(x,\xi)$, of decreasing order. These decay conditions are essential for the symbolic calculus to work and the integral representation was in fact constructed by the WKB method. Recently in \cite{cgnr} the above representation \eqref{classicalFIO} was proved to be true for small time only under the assumption \eqref{wtat}. Such a representation however does no longer keep valid for large time because of the appearance of caustics; in other terms, the space of FIOs is not an algebra. Different approaches have been proposed by several authors, see e.g.\ \cite{asada-fuji, bony1,bony2,bony3,cgnr2,fu,graffi,kki1,kt,marzuola,parmeggiani,tataru,treves}. \par
In the case of analytic symbols, which is the framework of this paper, there are further technical difficulties and surprisingly, to our knowledge, it is not even known whether the {\it exact} integral representation \eqref{classicalFIO} holds for an analytic phase and symbol, at least for small time. We will answer positively this question as a byproduct of more general results.\par Namely, consider the analytic symbol classes $S^{(k)}_{a}$, $k\in\bN$, defined by the estimates
\begin{equation}\label{s200}
|\partial^\alpha_{\xi}\partial^\beta_x a(x,\xi)|\leq C^{|\alpha|+|\beta|+1}\alpha!\beta!,\quad |\alpha|+|\beta|\geq k,\quad x,\xi\in\rd,
\end{equation}
endowed with the obvious inductive limit topology of Fr\'echet spaces.\par
 Let now $T>0$ be fixed and consider therefore a symbol $a(t,x,\xi)$, $t\in[0,T]$, $x,\xi\in\rd$, satisfying the following conditions:
\begin{itemize}
\item[\bf (i)] $a(t,x,\xi)$ is real-valued, $t\in[0,T]$, $x,\xi\in\rd$;
\item[\bf (ii)] $a(t,\cdot)$ belongs to a bounded subset of $S^{(2)}_{a}$ for $t\in[0,T]$;
\item[\bf (iii)] the map $t\mapsto a(t,\cdot)$ is (weakly) continuous from $[0,T]$ to $S'(\rd)$ (or equivalently pointwise).
\end{itemize}
As a very simple example, one may consider the operator $a(t,x,D)=-\Delta+V(t,x)$,
where the potential $V(t,x)$ is real-valued, continuous with respect to $t$, and verifying $|\partial^\alpha_x V(t,x)|\leq C^{|\alpha|+1}\alpha!$ for $|\alpha|\geq 2$ (cf.\ \cite{kki4}). \par
Under the above hypothesis it is easy to show by the usual energy method that the Cauchy problem \eqref{equazione} is wellposed in $\cS(\rd)$ (cf.\ \cite{tataru}). More generally, one can consider the strongly continuous propagator
\[
S(t,s):\cS(\rd)\to\cS(\rd),\quad 0\leq s\leq t\leq T,
\]
which maps the initial datum at time $s$ to the solution at time $t$. 
\par
In order to state our main result, let us fix some notation.  \par
For $x,\xi\in\rd$ we define the phase space shifts
\[
\pi(x,\xi)f= T_x M_\xi f,
\]
where $T_x f(y)=f(y-x)$ and $M_\xi f(y)=e^{i\xi y} f(y)$ are the translation and modulation operators.\par
Moreover we consider the Hamiltonian flow $(x^t,\xi^t)$, as a function of $t\in[0,T]$, $x,\xi\in\rd$, given by the solution of
\begin{equation}\label{sistema}
\begin{cases}
\dot{x}^t=a_\xi(t,x^t,\xi^t)\\
\dot{\xi}^t=-a_x(t,x^t,\xi^t) \\
x^0(x,\xi)=x,\ \xi^0(x,\xi)=\xi.
\end{cases}
\end{equation}
Further consider the real-valued phase $\psi(t,x,\xi)$ defined by
\begin{equation}\label{fase}
\psi(t,x,\xi)= \int_0^t \Big(\xi^s a_\xi (s,x^s,\xi^s)-a(s,x^s,\xi^s)\Big)\,ds.
\end{equation}
We also define the Gelfand-Shilov space \cite{GS}
\begin{equation}\label{gs}
S^1_1(\rd)=\{f\in\cS(\rd):\ |x^\alpha\partial^\beta f(x)|\leq C^{|\alpha|+|\beta|+1}\alpha!\beta!\  \forall\alpha,\beta\in\bN^d,\ \textrm{for some}\ C>0\},
\end{equation}
with the inductive limit topology. Functions in $S^1_1(\rd)$ are analytic and decay exponentially at infinity, and the same holds for their Fourier transform.
 \par
The following result gives a global-in-time representation of the corresponding propagator.
\begin{theorem}\label{mainteo}
Fix any window $g\in S^1_1(\rd)$. Under the above assumptions ${\bf (i)-(iii)}$, the propagator $S(t,s)$ has the following integral representation: for $f\in\cS(\rd)$,
\begin{equation}\label{rap}
S(t,s)f=\int_{\rdd} e^{i\psi(t,x,\xi)-i\psi(s,x,\xi)}\pi(x^t,\xi^t) G(t,s,x,\xi,\cdot)\langle f,\pi(x^s,\xi^s)g\rangle\,dx\,d\xi,
\end{equation}
for $0\leq s\leq t\leq T$, for some window $G(t,s,x,\xi,y)$ such that each derivative $\partial^\alpha_x\partial^\beta_\xi G(t,s,x,\xi,y)$, $\alpha,\beta\in\bN^d$, belongs to a bounded subset of $S^1_1(\rd)$ as a function of $y$, when $0\leq s\leq t\leq T$, $x,\xi\in\rd$.  \par
\end{theorem}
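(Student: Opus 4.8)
The strategy I would follow is a direct analytic upgrade of the smooth-category argument of \cite{cgnr2}: construct the representation first for small time by solving a transport equation for the window $G$ and a Hamilton--Jacobi equation for the phase, then propagate to arbitrary $t\in[0,T]$ by a composition/iteration argument that is closed in the analytic class. Concretely, I would start from the Gabor frame reproducing formula $f=\int_{\rdd}\langle f,\pi(x,\xi)g\rangle\,\pi(x,\xi)g\,dx\,d\xi$ (up to normalization, with $g\in S^1_1(\rd)$), apply $S(t,s)$ under the integral sign, and seek to write $S(t,s)\pi(x^s,\xi^s)g$ in the form $e^{i(\psi(t)-\psi(s))}\pi(x^t,\xi^t)G(t,s,x,\xi,\cdot)$. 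Differentiating this ansatz in $t$ and using the equation $D_tu+a^w u=0$ reduces, after conjugating the Weyl operator by the metaplectic/phase-space shift $\pi(x^t,\xi^t)$ and the scalar phase factor $e^{i\psi(t)}$, to an evolution equation for $G$ of the form $\partial_t G=b^w(t,x,\xi,\cdot)G$ where $b$ is the "remainder symbol" obtained from the second-order Taylor expansion of $a$ around the trajectory $(x^t,\xi^t)$ (the zeroth and first order terms are killed exactly by the choice of flow \eqref{sistema} and phase \eqref{fase}, by design). Since $a(t,\cdot)\in S^{(2)}_a$, the symbol $b$ lies in $S^{(0)}_a$ with bounds uniform in the parameters $(x,\xi)$ and, by analytic dependence of the flow on initial data (which follows from (ii) and the Cauchy--Kovalevskaya/Picard iteration with Gevrey-$1$ estimates), depends analytically on $(x,\xi)$ as well.

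The heart of the matter is then a well-posedness-and-regularity statement for the auxiliary Cauchy problem $\partial_t G=b^w G$, $G|_{t=s}=g$, in the Gelfand--Shilov space $S^1_1(\rd)$, with the additional requirement that all $(x,\xi)$-derivatives of $G$ stay in a bounded subset of $S^1_1$. This is where I expect the main obstacle to lie. In the $C^\infty$ theory one uses that $b^w$ (with $b\in S^0_{0,0}$) is bounded on every modulation space $M^{p}$ and on $\cS$, so a Picard/energy argument gives $G\in\cS$ with seminorm control; here one must instead show that $b^w$ with $b\in S^{(0)}_a$ acts boundedly on $S^1_1(\rd)$ and, crucially, with operator bounds that are summable when one iterates, so that the Picard series $G=\sum_k \int\cdots\int b^w\cdots b^w g$ converges in $S^1_1$. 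The natural tool is the characterization of $S^1_1$ via exponential decay of the short-time Fourier transform, $|V_g f(z)|\lesssim e^{-\epsilon|z|}$, combined with almost-diagonalization of $b^w$ in the Gabor frame by a kernel with exponential off-diagonal decay (this is precisely the analytic analogue of Gröchenig--Rochberg-type almost diagonalization, and should be available from the hypotheses, perhaps quoted from the companion work of the same authors). Granting such an almost-diagonalization, the iterated kernels enjoy exponential decay with a loss of $\epsilon$ that can be absorbed, and convergence is quantitative.

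Once $G(t,s,x,\xi,\cdot)$ is constructed on a small interval $|t-s|\le\delta$ (with $\delta$ depending only on the constants in (ii), not on $(x,\xi)$, since the flow estimates are uniform), the representation \eqref{rap} holds there by construction, and the stated boundedness of $\partial^\alpha_x\partial^\beta_\xi G$ in $S^1_1$ follows from differentiating the Picard series and using analytic dependence of $b$, the flow, and the phase $\psi$ on $(x,\xi)$ — each differentiation brings down factors controlled by the $S^{(0)}_a$/$S^1_1$ estimates, with $\alpha!\beta!$-type growth that is exactly what the target class permits. To reach arbitrary $t\in[0,T]$ I would subdivide $[s,t]$ into $N\sim T/\delta$ subintervals and compose: the key point is that composing two operators of the form \eqref{rap} again yields an operator of the same form, because (a) the phase-space shifts compose via the flow's group property $(x^t,\xi^t)=$ flow from time $r$ applied to $(x^r,\xi^r)$, the cocycle relation $\psi(t)-\psi(r)+\psi(r)-\psi(s)=\psi(t)-\psi(s)$ (which one checks directly from \eqref{fase} using the flow equations), and (b) the composition of the windows is an integral $\int_{\rdd} \overline{\langle \pi(\cdot)G_1,\pi(\cdot)g\rangle}\,G_2\,d\cdots$ whose kernel again has exponential decay in the Gabor variables, hence defines a new window in $S^1_1$ with the same uniform bounds — this is the analytic analogue of the fact that generalized Gabor multipliers with rapidly decaying kernels form an algebra. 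Iterating $N$ times (a fixed finite number) preserves all the estimates, which yields \eqref{rap} for all $0\le s\le t\le T$ and completes the proof.
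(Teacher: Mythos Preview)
Your overall architecture---start from the Gabor inversion formula, define $G$ by conjugating $S(t,s)\pi(x^s,\xi^s)g$ with the flow shift and phase factor, and reduce to an evolution equation $D_tG+a_2^w(t,y,D_y)G=0$ with $a_2$ the second-order Taylor remainder of $a$ along the trajectory---matches the paper exactly. The divergence, and the gap, is in how you propose to solve that auxiliary equation in $S^1_1$.

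Your claim that ``since $a(t,\cdot)\in S^{(2)}_a$, the symbol $b$ lies in $S^{(0)}_a$'' is false. The remainder $a_2(t,y,\eta)$ vanishes to second order at $(y,\eta)=0$, but it \emph{grows quadratically} at infinity: one only has $|\partial^\gamma_y\partial^\nu_\eta a_2|\le C^{|\gamma|+|\nu|+1}\gamma!\nu!(1+|y|+|\eta|)^{(2-|\gamma|-|\nu|)_+}$. In particular $a_2^w$ is an unbounded operator (think of the harmonic oscillator as the model), so there is no Gabor almost-diagonalization with exponentially decaying kernel, and the Picard series $\sum_k\int\cdots\int (a_2^w)^k g$ does not converge in $S^1_1$ (nor even in $L^2$) on any time interval, however short. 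Your small-time plus composition scheme therefore never gets started.

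What the paper does instead is prove a dedicated \emph{infinite-order energy estimate} (their Theorem~\ref{enerinf}): for symbols satisfying exactly the linear-growth-on-first-derivatives condition that $a_2$ enjoys, one controls $E^{\epsilon(t)}_N[u(t)]:=\sum_{|\alpha|+|\beta|=N}\frac{\epsilon(t)^{|\alpha|+|\beta|}}{\alpha!\beta!}\|x^\alpha\partial^\beta u(t)\|_{L^2}$ in terms of the initial data, at the price of an exponentially shrinking analyticity radius $\epsilon(t)=\epsilon_0 e^{-At}$. The proof is an induction on $N$ coupling the basic $L^2$ energy inequality with a careful commutator estimate and Gronwall; the quadratic growth is absorbed precisely by the decay of $\epsilon(t)$. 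This yields $G(t,\cdot)\in S^1_1$ uniformly for all $t\in[0,T]$ in one stroke---no small-time restriction, no composition step.

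For the $(x,\xi)$-derivatives the paper also proceeds differently and more cleanly than differentiating a series: a direct computation using $d\psi=\xi^t\,dx^t-\xi\,dx$ shows that $\partial_xG$ and $\partial_\xi G$ are finite linear combinations (with coefficients bounded by \eqref{regol}) of $y^\delta\partial^\gamma_yG$ and of the analogous expressions with $g$ replaced by $y^\mu\partial^\nu_y g$. Since $S^1_1$ is stable under multiplication by polynomials and differentiation, one inducts on $|\alpha|+|\beta|$, invoking the $\alpha=\beta=0$ case (already established via the energy estimate) for each modified window.
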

A similar representation in the smooth category was obtained by Tataru \cite{tataru} (when the window $g$ is Gaussian, but his argument extends to any $g\in\cS(\rd)$), see\ also \cite{kt,marzuola}. To be precise, if $g\in\cS(\rd)$ and ${\bf (ii)}$ is replaced by the weaker condition \eqref{wtat}, then the integral representation \eqref{rap} holds true with a window $G(t,s,x,\xi,y)$ which is Schwartz with respect to $y$ uniformly with respect to $t,s,x,\xi$ (moreover $G(t,s,x,\xi,\cdot)$ is continuous in $\cS(\rd)$ as a function of $s,t\in[0,T]$, for fixed $x,\xi\in\rd$; this was not stated explicitly but it follows easily from the proof). \par
Formally, our result therefore amounts to replacing the Schwartz space $\cS(\rd)$ by $S^1_1(\rd)$. However, to this end we will need much more refined energy estimates in certain analytic function spaces, which will be proved in Section 2. Incidentally, these estimates seem of particular interest in their own right and show that the radius of analyticity of the solution decreases at most exponentially. We plan to carry on this issue elsewhere, in the more general context of nonlinear Schr\"odinger equations. Instead here we present three applications of the above result which represent, in fact, our main motivation: \begin{itemize}
\item[\bf (a)] the  exponential sparsity of the corresponding Gabor matrix; 
\item[\bf (b)] the representation \eqref{classicalFIO} as a classical FIO away from caustics;
\item[\bf (c)] the propagation of analytic singularities.
\end{itemize}
 We now briefly discuss these applications. \par\smallskip
{\bf (a)} Almost diagonalization of pseudodifferential operators via Gabor wave packets \cite{g-ibero,GR,rochberg,tataru} represents an important contribution of Time-frequency analysis to PDEs. The case of FIO of the above type was considered in \cite{cgnr,cnr1,tataru} and the Gabor matrix of such an operator was proved to be highly concentrated along the graph of the corresponding canonical transformation. In particular, for smooth symbols and phases one obtains super-polynomial decay. Recently we considered the problem of the {\it exponential} sparsity for a large class of constant coefficient evolution operators \cite{cnr2} and of classical FIOs with analytic phases and symbols \cite{cnribero}. Now, it follows from the above representation in Theorem \ref{mainteo} that similarly the propagator $S(t,s)$ displays an exponential sparsity. Namely, if $g\in S^1_1(\rd)$, under the above assumptions we have the estimate
\begin{equation}\label{sparsityeq0}
|\langle S(t,0)\pi(z)g,\pi(w)g\rangle|\leq C\exp\big(-\epsilon|w-\chi_t(z)|\big),\quad w,z\in\rdd,\ 0\leq t\leq T,
\end{equation}
for some constants $C,\epsilon>0$, where $\chi_t(x,\xi)=(x^t,\xi^t)$ is the corresponding canonical transformation. \par
An immediate consequence is the continuity of $S(t,0)$ on a large class of weighted modulation spaces \cite{book,baoxiang}, whose weight may grow even exponentially (which corresponds to analyticity or exponential decay for the functions in those spaces). We refer to Section 4 below for their definition and the precise statement (Corollary 4.3). Here we only mention the papers \cite{bertinoro3,cgnr,cgnr2,cn,cnr1,kki4,MNRTT,ruz,bertinoro57,bertinoro58,bertinoro58bis,baoxiang} devoted to the continuity of the propagator $e^{it\Delta}$ and generalizations on weighted modulation spaces, in the case of weights with polynomial growth. Incidentally we observe that modulations spaces with exponential weighs were also used with success in \cite{bertinoro57,baoxiang} to quantify the smoothing effect, of infinite order, of the heat semigroup $e^{t\Delta}$ for $t>0$ (cf.\ also \cite{lerner2,morimoto}). \par\medskip
{\bf (b)} As anticipated we can come back to the classical Fourier representation \eqref{classicalFIO} at least for those values of $t$ such that ${\rm det}\, \partial x^t/\partial x\not=0$. In fact, we can prove the following remarkable {\it characterization}. \par\medskip
{\it At every instant time when ${\rm det}\, \partial x^t/\partial x\not=0$, the estimate \eqref{sparsityeq0} turns out to be equivalent to the integral representation \eqref{classicalFIO} for a phase $\Phi(t,\cdot)$ corresponding to $\chi_t$ and some symbol $\sigma(t,\cdot)\in S^{(0)}_a$.}  
\par\medskip
Notice that even for the special case of nice symbols, e.g.\ with a polyhomogeneous expansion, we could not deduce this claim via a WKB construction, because a global symbolic calculus in $\rd$ is not available in the analytic category. \par\medskip
{\bf (c)} The sparsity estimate \eqref{sparsityeq0} is definitively a result of propagation of analytic singularities. This will be made explicit in terms of the filter of the singularities $\mathfrak{F}(f)$, $f\in (S^1_1)'(\rd)$, i.e.\ the system of neighborhoods at infinity of the analytic spectrum of $f$ in $\rdd$, cf.\ \cite{rodino82} and Definitions \ref{def7.3}, \ref{def7.7} in the sequel. Namely we shall prove 
\begin{equation}\label{9.bis} 
\chi_t(\mathfrak{F}(f))= \mathfrak{F}(S(t,0)f).
\end{equation}
The result is optimal in the absence of further assumptions on $a(t,x,\xi)$ and $\chi_t$. When $a(t,x,\xi)$ has additional structure, one can rephrase \eqref{9.bis} by fixing a compactification of $\rdd$, having stability with respect to $\chi_t$. The simplest and perhaps most natural case is, concerning polyhomogeneous symbols, the compactifications by a sphere at infinity, see \cite{sato69} and in the global setting \cite{hormander}. We address to \cite{mizuhara} for a rich bibliography on this subject, and we leave to the reader to restate \eqref{9.bis} in the polyhomogeneous case.  
\par\medskip
Briefly, the paper is organized as follows. In Section 2 we prove the above mentioned infinite order energy estimate. In Section 3 we will discuss the integral representation in \eqref{rap} in the special case when the symbol $a(t,x,\xi)$ in \eqref{equazione} is a second order polynomial. It turns out that the window $G$ is then independent of $x,\xi$ and, in fact,  $G=S(t,s)g$. This also serves as an illustration of the more involved arguments in the subsequent section. Section 4 is devoted to the proof of Theorem \ref{mainteo}. In Section 5 we prove the above mentioned sparsity result of the Gabor matrix and will deduce, as a consequence, continuity on modulation spaces with exponential growth. In Section 6 we come back to the representation \eqref{classicalFIO} as classical FIO. Finally in Section 7 we study the problem of propagation of analytic singularities. \par\medskip
{\bf Notation}
The Fourier transform is normalized as $$\Fur(f)(\xi)=\widehat{f}(\xi)=\int_{\rd} e^{-ix\xi} f(x)\,dx.$$
\par
We will denote by $\langle\cdot,\cdot\rangle$ the inner product in $L^2(\rd)$ or the duality bracket (linear in the first factor).
We have already defined in the Introduction the phase space shifts $\pi(x,\xi)f=M_\xi T_x f$. Given $f,g$ in spaces in duality, e.g.\ $f\in\cS'(\rd)$, $g\in\cS(\rd)$, the short-time Fourier transform (STFT), or Bargman transform, of $f$ with window $g$ is defined as
\begin{equation}\label{stft}
V_g f(z)=\langle f,\pi(z)g\rangle,\quad z=(x,\xi)\in\rd\times\rd.
\end{equation}
The Weyl quantization of a symbol $a(x,\xi)$ is defined as
\[
a^w(x,D)f=(2\pi)^{-d}\iint e^{i(x-y)\xi}a\Big(\frac{x+y}{2},\xi\Big)f(y)\,dy\,d\xi.
\]
We recall that real-valued symbols give rise to formally self-adjoint operators.\par
The symbol classes $S^{(k)}_{a}$, $k\in\bN$, were already introduced above in \eqref{s200}.\par
\section{Infinite order energy estimates}
It is clear from the definition in \eqref{gs} that $f\in S^1_1(\rd)$ if and only if there exists $\epsilon>0$ such that the numerical sequence
\[
E^\epsilon_N[f]:=\sum_{|\alpha|+|\beta|= N}\frac{\epsilon^{|\alpha|+|\beta|}}{\alpha!\beta!}\|x^\alpha\partial^\beta f\|_{L^2},\quad N\in\mathbb{N},
\]
 is bounded (cf.\ \cite[Section 6.1]{nr}). We prove by induction on $N$ that this is the case for any solution to the problem \eqref{equazione}, if $u_0\in S^1_1(\rd)$. In the following result a certain uniformity of the constants involved is emphasized. This is essential in the applications below.
 \begin{theorem}\label{enerinf}
Let $a(t,x,\xi)$ satisfy conditions {\bf (i)} and ${\bf (iii)}$ in the Introduction and also the estimate\par\medskip
{\bf (ii)$^\prime$}  there exists a constant $C_1>0$ such that
\[
|\partial^\alpha_x\partial^\beta_\xi a(t,x,\xi)|\leq C_1^{|\alpha|+|\beta|+1}\alpha!\beta! (1+|x|+|\xi|),\ |\alpha|+|\beta|\geq 1,\ t\in[0,T],\ x,\xi\in\rd.
\]
Then there exist constants $\overline{\epsilon}_0,A>0$ depending only on the dimension $d$ and the constants $C_1$ in {\bf (ii)$^\prime$} such that for every solution $u\in C^1([0,T],\cS(\rd))$ to
\[
D_tu+a^w(t,x,D)u=0
\]
 we have
 \begin{equation}\label{stima00}
E^{\epsilon(t)}_N[u(t)]\leq 2 \sup_{0\leq k\leq N} E^{\epsilon_0}_k[u(0)],\quad N\in\bN,\ t\in[0,T],
\end{equation}
with
\begin{equation}\label{epst}
\epsilon(t)=\epsilon_0 e^{-At},
\end{equation} for every $0<\epsilon_0\leq \overline{\epsilon}_0$.
\end{theorem}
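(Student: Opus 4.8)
The plan is to prove the estimate \eqref{stima00} by induction on $N$, keeping careful track of how the constants degrade. The base case $N=0$ is the standard $L^2$ conservation: since $a$ is real-valued, $a^w(t,x,D)$ is formally self-adjoint, so $\frac{d}{dt}\|u(t)\|_{L^2}^2 = 2\,{\rm Re}\langle \partial_t u, u\rangle = 2\,{\rm Re}\langle i a^w u, u\rangle = 0$, hence $E^{\epsilon(t)}_0[u(t)]=\|u(t)\|_{L^2}=\|u(0)\|_{L^2}=E^{\epsilon_0}_0[u(0)]$, which is even better than the claimed factor $2$. For the inductive step, fix multi-indices $\alpha,\beta$ with $|\alpha|+|\beta|=N$ and set $u_{\alpha\beta}=x^\alpha\partial^\beta u$. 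Differentiating the equation and commuting $x^\alpha\partial^\beta$ past $D_t + a^w$ produces
\[
D_t u_{\alpha\beta} + a^w u_{\alpha\beta} = [a^w, x^\alpha\partial^\beta] u = \sum_{(\alpha',\beta')} c_{\alpha'\beta'}\, b_{\alpha'\beta'}^w(t,x,D)\, x^{\alpha-\alpha'}\partial^{\beta-\beta'} u + (\text{lower order in }x,\partial),
\]
where the symbols $b_{\alpha'\beta'}$ are (up to combinatorial constants) derivatives of $a$ of order $|\alpha'|+|\beta'|\geq 1$. Taking the $L^2$ norm, using $\|x^\alpha\partial^\beta u(t)\|_{L^2}' \leq \|[a^w,x^\alpha\partial^\beta]u\|_{L^2}$ (again by self-adjointness of $a^w$), and invoking the Calderón–Vaillancourt-type bounds for the $b_{\alpha'\beta'}^w$ acting between the relevant weighted spaces, one obtains a differential inequality of the form $\frac{d}{dt}\|u_{\alpha\beta}(t)\|_{L^2} \leq \sum C^{|\alpha'|+|\beta'|}\,\alpha'!\beta'!\,\|x^{\alpha-\alpha'}(1+|x|)\partial^{\beta-\beta'}u\|$ plus controlled remainders, where the weight $(1+|x|)$ from hypothesis {\bf (ii)$'$} is the reason one can close only with the growing factor $\epsilon(t)=\epsilon_0 e^{-At}$ rather than a fixed $\epsilon$.

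The heart of the matter is to package these estimates at the level of the generating sequence $E^{\epsilon(t)}_N$. Multiplying the inequality for $\|u_{\alpha\beta}\|_{L^2}$ by $\epsilon(t)^N/(\alpha!\beta!)$ and summing over $|\alpha|+|\beta|=N$, the factorial weights $\alpha'!\beta'!$ in the commutator exactly cancel against the $1/(\alpha!\beta!)$ normalization in the definition of $E^\epsilon_N$, up to binomial factors $\binom{\alpha}{\alpha'}\binom{\beta}{\beta'}$ that are absorbed by a geometric series in a slightly smaller $\epsilon$; the extra weight $(1+|x|)$ raises the index from $N-|\alpha'|-|\beta'|$ effectively by one, so one ends up bounding $\frac{d}{dt}E^{\epsilon(t)}_N[u(t)]$ by $A\,\dot\epsilon(t)\epsilon(t)^{-1}$ times $\sup_{k\leq N} E^{\epsilon(t)}_k[u(t)]$ (the derivative of $\epsilon(t)^N$ contributes $-AN\epsilon(t)^N$, which is what lets the $(1+|x|)$-term be reabsorbed for $A$ large). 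Then Grönwall's inequality in the scalar quantity $\mathcal{E}_N(t):=\sup_{k\leq N}E^{\epsilon(t)}_k[u(t)]$, combined with the inductive hypothesis that $\mathcal{E}_{N-1}(t)\leq 2\,\mathcal{E}_{N-1}(0)$, yields $E^{\epsilon(t)}_N[u(t)]\leq 2\sup_{k\leq N}E^{\epsilon_0}_k[u(0)]$ once $\overline{\epsilon}_0$ and $A$ are chosen (depending only on $d$ and $C_1$) so that the accumulated constant stays below $2$; the role of $\overline{\epsilon}_0$ being small is precisely to make the geometric series from the binomial coefficients converge with room to spare.

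The main obstacle, and the place where the argument differs substantially from the Schwartz-category version, is the bookkeeping of the commutator $[a^w, x^\alpha\partial^\beta]$ at the analytic scale: one must show that the constants produced are genuinely of the form $C^{|\alpha'|+|\beta'|}\alpha'!\beta'!$ with $C$ independent of $N$, so that after summation nothing blows up combinatorially, and simultaneously that the operator norms of the $b_{\alpha'\beta'}^w$ on the weighted $L^2$ spaces are controlled uniformly in $t\in[0,T]$ using only {\bf (ii)$'$} and {\bf (iii)}. This requires a version of the symbolic/operator estimates (Calderón–Vaillancourt with weights, and the reduction of $(1+|x|+|\xi|)\partial^\gamma a$ bounds to operator bounds via e.g.\ the Cotlar–Stein lemma or a direct kernel estimate) that is quantitatively explicit in the number of derivatives. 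Everything else is routine Grönwall and induction, but getting the constants in that commutator lemma to line up with the $\alpha!\beta!$ normalization is the crux.
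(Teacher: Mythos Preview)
Your proposal is correct and follows essentially the same route as the paper: induction on $N$, the base case by self-adjointness, the inductive step via the energy inequality applied to $x^\alpha\partial^\beta u$, an explicit commutator expansion with factorial bookkeeping, the observation that the linear weight $(1+|x|+|\xi|)$ in {\bf (ii)$'$} raises the effective order by one so that the top-order contribution is of size $C'N\,E^{\epsilon}_N$, and a Gr\"onwall argument exploiting the exponential decay of $\epsilon(t)$ to absorb that factor of $N$. The only packaging differences are that the paper works with the integral form of the energy estimate and a weighted Gr\"onwall lemma (with weight $a(t)=\epsilon(t)^N$) rather than differentiating $E^{\epsilon(t)}_N[u(t)]$ directly, and that the paper separates the commutator bound explicitly into a top-order piece $\int_0^t (\epsilon(t)/\epsilon(\sigma))^N C'N\,E^{\epsilon(\sigma)}_N[u(\sigma)]\,d\sigma$ (fed into Gr\"onwall) and a lower-order remainder $\leq \tfrac12\sup_{k\leq N}E^{\epsilon_0}_k[u(0)]$ (controlled by the inductive hypothesis together with the smallness of $\epsilon_0$), rather than running Gr\"onwall on the supremum $\sup_{k\leq N}E^{\epsilon(t)}_k$ as you suggest.
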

 \begin{proof}
Let $L=D_t+a^w(t,x,D)$. Since $a^w(t,x,D)$ is formally self-adjoint we have (cf.\ e.g.\  \cite[Section 2.1.2]{rauch}) the energy estimate
\begin{equation}\label{energy}
\|v(t)\|_{L^2}\leq \|v(0)\|_{L^2}+ \int_0^t \|Lv(\sigma)\|_{L^2}\,d\sigma
\end{equation}
for every function $v\in C^1([0,T],\cS(\rd))$ and $t\in [0,T]$. \par
This implies in particular \eqref{stima00} for $N=0$. Assume therefore $N\geq 1$ and let us prove \eqref{stima00} by induction on $N$.
To this end, we apply the above estimate to $v=x^\beta\partial^\alpha u$, $|\alpha|+|\beta|=N$, where $u$ solves $Lu=0$, $u(0)=u_0$. We get
\[
\|x^\beta\partial^\alpha u(t)\|_{L^2}\leq \|x^\beta\partial^\alpha u_0\|_{L^2}+ \int_0^t \|[L,x^\beta\partial^\alpha]u(\sigma)\|_{L^2}\,d\sigma.
\]
We multiply by $\epsilon(t)^{|\alpha|+|\beta|}/(\alpha!\beta!)$, $|\alpha|+|\beta|=N$, and we obtain, since $\epsilon(t)\leq\epsilon(0)=\epsilon_0$,
\begin{multline}\label{astast}
E^{\epsilon(t)}_{N}[u(t)]\leq E^{\epsilon_0}_{N}[u_0]+ \int_0^t \frac{\epsilon(t)^{N}}{\epsilon(\sigma)^{N}}\sum_{|\alpha|+|\beta|=N}\frac{\epsilon(\sigma)^{|\alpha|+|\beta|}}{\alpha!\beta!}\| [L,x^\beta \partial^\alpha]u(\sigma)\|_{L^2}\,d\sigma.
\end{multline}
 Now, we will prove  below the following estimate on the integral involved in the right-hand side of \eqref{astast}. \par
Set, for brevity
\[
\mathcal{E}^{\epsilon_0}_N[u_0]=\sup_{0\leq k\leq N} E^{\epsilon_0}_k[u_0].
\]
\begin{proposition}\label{procomm} There exist constants $C'>0$ and $\overline{\epsilon}_0>0$, depending only on the dimension $d$ and the constant $C_1$ in {\bf (ii)$^\prime$} such that, for every $\epsilon_0\leq \overline{\epsilon}_0$  and $A\geq 1$ in \eqref{epst} we have
\begin{multline}\label{comm}
\int_0^t \frac{\epsilon(t)^{N}}{\epsilon(\sigma)^{N}}\sum_{|\alpha|+|\beta|=N}\frac{\epsilon(\sigma)^{|\alpha|+|\beta|}}{\alpha!\beta!}\|[L,x^\beta\partial^\alpha]u(\sigma)\|_{L^2}\,d\sigma\\
 \leq \int_0^t \frac{\epsilon(t)^{N}}{\epsilon(\sigma)^{N}}C'N E^{\epsilon(\sigma)}_N[u(\sigma)]\,d\sigma+\frac{1}{2}\mathcal{E}^{\epsilon_0}_N[u_0].
\end{multline}
\end{proposition}
We can therefore continue the computation in \eqref{astast} as
\begin{equation}\label{astastast}
E^{\epsilon(t)}_{N}[u(t)]\leq \frac{3}{2}\mathcal{E}^{\epsilon_0}_{N}[u_0]+\epsilon(t)^{N} C'N\int_0^t \frac{E^{\epsilon(\sigma)}_N[u(\sigma)]}{\epsilon(\sigma)^{N}}\,d\sigma.
\end{equation}
We now use the following form of Gronwall inquality, which can be deduced easily from the classical one (see e.g.\ \cite[Lemma 2.1.3]{rauch}).
\begin{lemma}\label{gronwall}
Let $0\leq g,\psi,a\in L^\infty_{loc}([0,T])$, with $a>0$, and $0\leq h\in L^1_{loc}([0,T])$, and
\[
\psi(t)\leq g(t)+a(t)\int_0^t h(\sigma)\frac{\psi(\sigma)}{a(\sigma)}\,d\sigma.
\]
Then, with $H(t):=\int_0^t h(\sigma)\, d\sigma$,
\[
\psi(t)\leq g(t)+a(t) e^{H(t)}\int_0^t e^{-H(\sigma)}h(\sigma)\frac{g(\sigma)}{a(\sigma)}\,d\sigma.
\]
\end{lemma}
We apply Lemma \ref{gronwall} to \eqref{astastast} with $\psi(t)=E^{\epsilon(t)}_{N}[u(t)]$, $g(t)=\frac{3}{2}\mathcal{E}^{\epsilon_0}_{N}[u_0]$, $h(\sigma)=C'N$, $a(t)=\epsilon(t)^{N}$. \par
From \eqref{astastast} we therefore get, if $A>C'$,
\begin{align*}
E^{\epsilon(t)}_{N}[u(t)] &\leq \frac{3}{2}\mathcal{E}^{\epsilon_0}_N[u_0]\Big(1+C'N e^{(C'-A)Nt}\int_0^t e^{(A-C')N\sigma}\,d\sigma\Big)\\
&\leq \frac{3}{2}\mathcal{E}^{\epsilon_0}_N[u_0] \Big(1+\frac{C'}{A-C'}\Big)
\end{align*}
and this last expression is $\leq 2 \mathcal{E}^{\epsilon_0}_N[u_0]$ if $A\geq 4C'$. This concludes the proof of \eqref{stima00}.

\end{proof}
\begin{proof}[Proof of Proposition \ref{procomm}]
It is slightly easier to work with the standard (left) quantization. We have $a^w(t,x,D)=\tilde{a}(t,x,D)$, with $\tilde{a}$ satisfying the same estimates as in {\bf (ii)$^\prime$} for a new constant $\tilde{C}_1$ depending only on the dimension $d$ and the early constant $C_1$ (cf.\ \cite[Chapter XVIII]{hormanderIII} or, more precisely, the proof of \cite[Theorem 1.2.4]{nr}). Hence we can assume that  {\bf (ii)$^\prime$} holds for $\tilde{a}$ as well. \par
Using the inverse Leibniz' formula we can write (cf.\ e.g.\  \cite[Formula (4.5)]{CN2})
\[
([L,x^\beta \partial^\alpha]u)=\sum_{\delta\leq \alpha}\sum_{\gamma\leq\beta\atop (\delta,\gamma)\not=(0,0)}(-1)^{|\gamma|+1}\binom{\beta}{\gamma}\binom{\alpha}{\delta}(D^\gamma_\xi\partial^\delta_x \tilde{a})(t,x,D_x) (x^{\beta-\gamma}\partial^{\alpha-\delta} u).
\]
Now, it follows from the continuity properties of pseudodifferential operators on weighted Sobolev spaces (see e.g.\  \cite[Proposition 1.5.5]{nr}) that if $|\gamma|+|\delta|\geq 1$, in view of the assumption {\bf (ii)$^\prime$} we have
\[
\| (D^\gamma_\xi\partial^\delta_x \tilde{a})(t,x,D_x)u\|_{L^2}\leq C_0 C_1^{|\gamma|+|\delta|}\gamma!\delta!\Big(\|u\|_{L^2}+\sum_{j=1}^d \|x_j u\|_{L^2}+\sum_{j=1}^d\|\partial_{x_j} u\|_{L^2}\Big)
\]
for some constant $C_0$ depending only on the dimension $d$, whereas the constant $C_1$ is the same which appears in {\bf (ii)$^\prime$}.\par
As a consequence we get
\begin{align}\label{es6}
&\frac{\epsilon^{|\alpha|+|\beta|}}{\alpha!\beta!}\|[L,x^\beta\partial^\alpha]u\|_{L^2}\leq C_0
\sum_{\delta\leq \alpha}\sum_{\gamma\leq\beta\atop (\delta,\gamma)\not=(0,0)} (C_1\epsilon)^{|\gamma|+|\delta|}\epsilon^{|\beta-\gamma|+|\alpha-\delta|}\frac{\|x^{\beta-\gamma}\partial^{\alpha-\delta}u\|_{L^2}}{(\beta-\gamma)!(\alpha-\delta)!}\\
&+C_0C_1
\sum_{j=1}^d\sum_{\delta\leq \alpha}\sum_{\gamma\leq\beta\atop (\delta,\gamma)\not=(0,0)}\sum_{j=1}^d (C_1\epsilon)^{|\gamma|+|\delta|-1}\epsilon^{|\beta-\gamma|+|\alpha-\delta|+1}(\beta_j-\gamma_j+1)\frac{\|x^{\beta-\gamma+e_j}\partial^{\alpha-\delta}u\|_{L^2}}{(\beta-\gamma+e_j)!(\alpha-\delta)!}\nonumber\\
&+C_0C_1
\sum_{j=1}^d\sum_{\delta\leq \alpha}\sum_{\gamma\leq\beta\atop (\delta,\gamma)\not=(0,0)} (C_1\epsilon)^{|\gamma|+|\delta|-1}\epsilon^{|\beta-\gamma|+|\alpha-\delta|+1}(a_j-\delta_j+1)\frac{\|x^{\beta-\gamma}\partial^{\alpha-\delta+e_j}u\|_{L^2}}{(\beta-\gamma)!(\alpha-\delta+e_j)!}\nonumber\\
&+ C_0C_1^{-1}
\sum_{j=1}^d\sum_{\delta\leq \alpha}\sum_{\gamma\leq\beta-e_j\atop (\delta,\gamma)\not=(0,0)} (C_1\epsilon)^{|\gamma|+|\delta|+1}\epsilon^{|\beta-\gamma-e_j|+|\alpha-\delta|}\frac{\|x^{\beta-\gamma-e_j}\partial^{\alpha-\delta}u\|_{L^2}}{(\beta-\gamma-e_j)!(\alpha-\delta)!}\nonumber
\end{align}
where $e_j$ denotes the $j$-th element of the canonical basis of $\R^d$ and we used
\[
\partial_{x_j} x^{\beta-\gamma}\partial^{\alpha-\delta}=x^{\beta-\gamma}\partial^{\alpha-\delta+e_j}+(\beta_j-\gamma_j)x^{\beta-\gamma-e_j}\partial^{\alpha-\delta}.
\]
Summing further on $|\alpha|+|\beta|=N$ the first term in the right-hand side of \eqref{es6} is dominated by
\[
C_0 \sum_{|\tilde{\alpha}|+|\tilde{\beta}|\leq N-1}\sum_{|\gamma|+|\delta|=N-|\tilde{\alpha}|-|\tilde{\beta}|} (C_1\epsilon)^{N-|\tilde{\alpha}|-|\tilde{\beta}|}\epsilon^{|\tilde{\alpha}|+|\tilde{\beta}|}
\frac{\| x^{\tilde{\beta}}\partial^{\tilde{\alpha}}u\|_{L^2}}{\tilde{\beta}!\tilde{\alpha}!}.
\]
Since the number of multiindices $(\gamma,\delta)$ satisfying $ |\gamma|+|\delta|=N-|\tilde{\alpha}|-|\tilde{\beta}|$ does not exceed $2^{N-|\tilde{\alpha}|-|\tilde{\beta}|+d-1}$ this last expression is
\[
\leq C'\sum_{k=0}^{N-1} (C'\epsilon)^{N-k} E^{\epsilon}_k[u]
\]
for some constant $C'$ as in the statement. The same holds for the fourth term in the right-hand side of \eqref{es6}. Similary we see that the sum on $|\alpha|+|\beta|=N$ of the second and third term is dominated by
\[
C'\sum_{k=1}^{N} (C'\epsilon)^{N-k} k E^{\epsilon}_k[u].
\]
Summing up we obtain, for a new constant $C'$ as above,
\begin{multline*}
\sum_{|\alpha|+|\beta|=N}\frac{\epsilon(\sigma)^{|\alpha|+|\beta|}}{\alpha!\beta!}\|[L,x^\beta\partial^\alpha]u(\sigma)\|_{L^2}\leq C'N E^{\epsilon(\sigma)}_N[u(\sigma)] \\ +C'\sum_{k=0}^{N-1}(C'\epsilon(\sigma))^{N-k} (k+1) E^{\epsilon(\sigma)}_k[u(\sigma)].
\end{multline*}
Substituting in the left hand side of \eqref{comm} we see that it is sufficient to prove that
\begin{equation}\label{pos2}
\int_0^t \frac{\epsilon(t)^{N}}{\epsilon(\sigma)^{N}} C'\sum_{k=0}^{N-1} (C'\epsilon(\sigma))^{N-k} (k+1) E^{\epsilon(\sigma)}_k[u(\sigma)]\,d\sigma\leq \frac{1}{2}\mathcal{E}^{\epsilon_0}_N[u_0] \  \textrm{for} \  N \geq 1,
\end{equation}
if $\epsilon(0)=\epsilon_0\leq\overline{\epsilon}_0$ with $\overline{\epsilon}_0$ small enough. \par
By the inductive hypothesis we have
\[
E^{\epsilon(\sigma)}_k[u(\sigma)]\leq 2 \mathcal{E}^{\epsilon_0}_N[u_0], \quad k<N,\ \sigma\in[0,T],
\]
and therefore the left-hand side of \eqref{pos2} can be estimated by
\begin{multline*}
2C' \mathcal{E}^{\epsilon_0}_N[u_0]\sum_{k=0}^{N-1} (C'\epsilon_0)^{N-k} (k+1)e^{-ANt}\int_0^t e^{Ak\sigma}\,d\sigma\\
\leq 2C' \mathcal{E}^{\epsilon_0}_N[u_0] \Big((C'\epsilon_0)^{N}te^{-NAt} +\sum_{k=1}^{N-1}  (C'\epsilon_0)^{N-k} \frac{k+1}{Ak}\Big).
\end{multline*}
The last expression in parenthesis is $<(2C')^{-1}/2$ if $\epsilon_0$ is small enough, for every $A\geq 1$, $N\geq 1$, $t\in[0,T]$. \par
This concludes the proof of \eqref{pos2} and therefore \eqref{comm} is proved.
\end{proof}
\section{A special case: metaplectic operators}
Let us consider the special case  when the symbol $a(t,x,\xi)$ in \eqref{equazione} is a second order homogeneous polynomial. We show that the propagator then admits a representation as in \eqref{rap}, with a window $G(t,s,y)$ independent of $x,\xi$. Moreover, it will be evident from the proof that the phase shift $\psi(t,x,\xi)$ in \eqref{rap} comes from the commutator of the propagator $S(t,0)$ and the time-frequency shift $\pi(x,\xi)$. \par
This result applies, in particular, to metaplectic operators, which arise as propagators for the Cauchy problem \eqref{equazione} when $a(t,x,\xi)$ is in addition time-independent (cf.\ \cite[Chapter 4]{folland}). 
\begin{theorem}\label{teo-pre}
Suppose that the symbol $a(t,x,\xi)$ in \eqref{equazione} is a real-valued second order homogeneous polynomial. Let $g\in S^1_1(\rd)$, $\|g\|_{L^2}=(2\pi)^{-d/2}$. Then the propagator $S(t,s)$ can be written in the form
\begin{equation}\label{rap-bis}
S(t,s)f=\int_{\rdd} e^{i\psi(t,x,\xi)-i\psi(s,x,\xi)}\pi(x^t,\xi^t) G(t,s,\cdot)\langle f,\pi(x^s,\xi^s)g\rangle\,dx\,d\xi,
\end{equation}
where $G(t,s,\cdot)=S(t,s)g$ is still in $S^1_1(\rd)$. 
\end{theorem}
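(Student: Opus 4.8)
The plan is to reduce the statement to a single intertwining identity between the propagator $S(t,s)$ and the phase-space shifts $\pi(x,\xi)$, and then to combine it with the Gabor reconstruction formula.

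First, some preliminaries. Since $a(t,x,\xi)$ is homogeneous of degree $2$ in $(x,\xi)$, its first derivatives are linear and its derivatives of order $\geq2$ are bounded, so condition {\bf (ii)$^\prime$} of Theorem \ref{enerinf} holds; hence $S(t,s)g\in S^1_1(\rd)$ for every $g\in S^1_1(\rd)$, with uniform bounds for $0\le s\le t\le T$, and $G(t,s,\cdot):=S(t,s)g$ is a legitimate window. Also, $\|g\|_{L^2}=(2\pi)^{-d/2}$ is exactly the normalization for which the STFT furnishes the resolution of the identity
\[
f=\int_{\rdd}\langle f,\pi(z)g\rangle\,\pi(z)g\,dz,\qquad f\in\cS(\rd),
\]
with convergence in $\cS(\rd)$.

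The core of the argument is the operator identity
\begin{equation}
S(t,s)\,\pi(x^s,\xi^s)=e^{i\psi(t,x,\xi)-i\psi(s,x,\xi)}\,\pi(x^t,\xi^t)\,S(t,s)\qquad\text{on }\cS(\rd),\tag{$\star$}
\end{equation}
for all $(x,\xi)\in\rdd$ and $0\le s\le t\le T$. I would prove ($\star$) by fixing $(x,\xi)$ and $s$ and checking that both sides, applied to an arbitrary $\phi\in\cS(\rd)$ and viewed as functions of $t$, solve the same Cauchy problem $D_tw+a^w(t,x,D)w=0$, $w(s)=\pi(x^s,\xi^s)\phi$; the well-posedness of \eqref{equazione} in $\cS(\rd)$ then forces them to agree. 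For the left-hand side this is immediate from $D_tS(t,s)=-a^w(t,x,D)S(t,s)$. For the right-hand side, denoting it $V(t)\phi$, I would compute $D_t(V(t)\phi)$ by the Leibniz rule, using: the fundamental theorem of calculus in \eqref{fase}, giving $\partial_t\psi(t,x,\xi)=\xi^t\cdot a_\xi(t,x^t,\xi^t)-a(t,x^t,\xi^t)$; Hamilton's equations \eqref{sistema}, which yield $D_t\pi(x^t,\xi^t)=\Lambda(t)\pi(x^t,\xi^t)$ for a first-order operator $\Lambda(t)$ with affine coefficients (explicitly $\Lambda(t)=-a_x(t,x^t,\xi^t)\cdot(x-x^t)-a_\xi(t,x^t,\xi^t)\cdot D$ with the convention $\pi(x,\xi)=T_xM_\xi$); the exact covariance of the Weyl calculus under phase-space translations, $\pi(w)^{-1}a^w(x,D)\pi(w)=[a(\cdot+w)]^w(x,D)$, which for a homogeneous quadratic $a$ collapses by Taylor's formula to
\[
a^w(t,x,D)\,\pi(w)=\pi(w)\big[a^w(t,x,D)+a_x(t,w)\cdot x+a_\xi(t,w)\cdot D+a(t,w)\big];
\]
and the elementary conjugations $\pi(w)\,x\,\pi(w)^{-1}=x-w_1$, $\pi(w)\,D\,\pi(w)^{-1}=D-w_2$. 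Collecting terms with $w=(x^t,\xi^t)$, the first-order contributions cancel by Hamilton's equations and the scalar ones cancel thanks to Euler's relation $x^t\cdot a_x(t,x^t,\xi^t)+\xi^t\cdot a_\xi(t,x^t,\xi^t)=2a(t,x^t,\xi^t)$; this is precisely the cancellation that pins down the phase as $\psi(t,x,\xi)-\psi(s,x,\xi)$. Since $V(s)=\pi(x^s,\xi^s)$, this completes the verification.

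Granting ($\star$), the theorem follows quickly. Applying the continuous operator $S(t,s)$ to the reconstruction formula and interchanging it with the integral gives
\[
S(t,s)f=\int_{\rdd}\langle f,\pi(z)g\rangle\,S(t,s)\pi(z)g\,dz .
\]
I would then substitute $z=(x^s,\xi^s)$: because $a$ is quadratic, $\chi_s:(x,\xi)\mapsto(x^s,\xi^s)$ is a linear symplectomorphism, hence a volume-preserving bijection of $\rdd$, so $dz=dx\,d\xi$ and the substitution covers all of $\rdd$. Using ($\star$) we obtain
\[
S(t,s)f=\int_{\rdd}e^{i\psi(t,x,\xi)-i\psi(s,x,\xi)}\,\pi(x^t,\xi^t)\big(S(t,s)g\big)\,\langle f,\pi(x^s,\xi^s)g\rangle\,dx\,d\xi ,
\]
which is exactly \eqref{rap-bis} with $G(t,s,\cdot)=S(t,s)g\in S^1_1(\rd)$.

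The hard part will be the term-by-term bookkeeping in the computation of $D_t(V(t)\phi)$: one has to keep careful track of the operator-ordering convention in $\pi(x,\xi)$ and to use the \emph{exact} (not just asymptotic) translation covariance of the Weyl quantization, and the cancellation of the zeroth-order terms rests on the homogeneity of $a$ through Euler's identity — this is precisely where the specific form of the phase $\psi$ in \eqref{fase} is needed. A minor point to be dealt with is the interchange of $S(t,s)$ with the reconstruction integral, which is justified by the continuity of $S(t,s)$ on $\cS(\rd)$ together with the rapid decay of $z\mapsto\langle f,\pi(z)g\rangle$.
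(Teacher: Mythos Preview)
Your proposal is correct and follows essentially the same route as the paper: reduce to an intertwining identity between $S(t,s)$ and phase-space shifts, verify it by showing both sides solve the same Cauchy problem (using the exact translation covariance of the Weyl calculus and the Hamilton system), and then combine with the Gabor inversion formula; the paper streamlines slightly by setting $s=0$ at the outset (so no change of variables $z=(x^s,\xi^s)$ is needed) and phrases the crucial scalar cancellation as the identity $a_2(t,y,\eta)=a(t,y,\eta)$ for the second-order Taylor remainder of a homogeneous quadratic, rather than via Euler's relation --- if you carry out the bookkeeping you will see that the $\xi^t\!\cdot a_\xi$ term from $\partial_t\psi$ cancels directly against the term produced by conjugating $a_\xi\!\cdot D$ through $\pi(x^t,\xi^t)$, so Euler is not actually the mechanism, though of course homogeneity is.
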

\begin{proof}
Without loss of generality we take $s=0$ (hence $x^0=x$, $\xi^0=\xi$, $\psi(0,x,\xi)=0$) and we therefore omit the variable $s$ in the notation for $G$.  
\par
We use the inversion formula\footnote{Our normalization $\|g\|_{L^2}=(2\pi)^{-d/2}$ is different from that in \cite[Corollary 3.2.3]{book} because we chose a different normalization in the definition of the Fourier transform.} \cite[Corollary 3.2.3]{book}
\begin{equation}\label{invformula}
f=\int_{\rdd} \langle f,\pi(x,\xi)g\rangle \pi(x,\xi)g\,dx\,d\xi
\end{equation}
to which we apply the propagator $S(t,0)$. We then obtain the desired formula \eqref{rap-bis} for the window
\[
G(t,x,\xi,y)=e^{-i\psi(t,x,\xi)} \pi(x^t,\xi^t)^\ast S(t,0)[\pi(x,\xi) g](y).
\]
We have to prove that this window is independent of $x,\xi$ and that, in fact,  
\begin{equation}\label{pre-0}
G(t,x,\xi,\cdot)=S(t,0)g.
\end{equation} The fact that it still belongs to $S^1_1(\rd)$ is then an immediate consequence of Theorem \ref{enerinf}.  \par
Now, \eqref{pre-0} is equivalent to
\[
S(t,0) \pi(x,\xi)= e^{i\psi(t,x,\xi)} \pi(x^t,\xi^t) S(t,0).
\]
In other terms, we have to prove that if $v(t,y)$ is a solution of $D_t v+a^w(t,y,D_y) v=0$ then 
\[
u(t,y):=e^{i\psi(t,x,\xi)}\pi(x^t,\xi^t)v(t,y)=e^{i\psi(t,x,\xi)} e^{i\xi^t(y-x^t)}v(t,y-x^t)
\]
is still a solution. To this end, by an explicit computation, using \eqref{sistema} and \eqref{fase}, we have
\begin{align*}
D_t u&=e^{i\psi(t,x,\xi)}\pi(x^t,\xi^t)[D_t v+\partial_t\psi v+\dot{\xi}^t yv-\xi^t \dot{x}^tv-\dot{x}^tD_y v]  \\
&=e^{i\psi(t,x,\xi)}\pi(x^t,\xi^t)[D_t v-a(t,x^t,\xi^t)v-a_x(t,x^t,\xi^t)y v-a_\xi(t,x^t,\xi^t)D_y v]
\end{align*}
whereas (cf.\ the symplectic invariance of the Weyl calculus in \cite[Theorem 18.5.9]{hormanderIII})
\[
a^w(t,y,D_y)u=e^{i\psi(t,x,\xi)}\pi(x^t,\xi^t)a^w(t,y+x^t,D_y+\xi^t)v.
\]
Hence we obtain
\[
D_t u+a^w(t,y,D_y) u=e^{i\psi(t,x,\xi)}\pi(x^t,\xi^t)[D_t v+a_2^w(t,y,D_y) v],
\]
with
\[
a_2(t,y,\eta)= a(t,y+x^t,\eta+\xi^t)-a(t,x^t,\xi^t)-a_x(t,x^t,\xi^t)y-a_{\xi}(t,x^t,\xi^t)\eta=a(t,y,\eta)
\]
where the last equality follows because $a$ is a homogeneous polynomial of order $2$. Hence we get the desired conclusion. 
\end{proof}
\section{Proof of the main result (Theorem \ref{mainteo})}
The proof relies on the energy estimate in the relevant function space $S^1_1(\rd)$, that is Theorem \ref{enerinf}, and some ingenious ideas from \cite[Theorem 5]{tataru} (where the smooth counterpart of the present theorem was proved), although the pattern followed below is technically different.
\par
Let $g$ be as in the statement. As in the proof of Theorem \ref{teo-pre} we take $s=0$ and we can also assume $\|g\|_{L^2}=(2\pi)^{-d/2}$, so that we get the desired formula \eqref{rap} with
\begin{align*}
G(t,x,\xi,y)&=e^{-i\psi(t,x,\xi)} \pi(x^t,\xi^t)^\ast S(t,0)[\pi(x,\xi) g]\\
&=
e^{-i\psi(t,x,\xi)} e^{-i\xi^t y} S(t,0)[\pi(x,\xi) g](y+x^t).
\end{align*}
It remains to prove the estimate
\begin{equation}\label{es1}
|y^\gamma \partial^\alpha_x\partial^\beta_\xi\partial^\nu_y G(t,x,\xi,y)|\leq C_{\alpha,\beta}^{|\gamma|+|\nu|+1}\gamma!\nu!
\end{equation}
for some constants $C_{\alpha,\beta}>0$ independent of $\gamma$ and $\nu$. First we prove this  for $\alpha=\beta=0$. We can argue as in the proof of Theorem \ref{teo-pre} and obtain that $G$ verifies the equation 
\begin{equation}\label{es3}
 \Big(D_t+a_2^w(t,y,D_y)\Big) G=0,\quad G(0)=g
 \end{equation}
 where
 \[
 a_2(t,y,\eta)=a(t,x^t+y,\xi^t+\eta)-a(t,x^t,\xi^t)-y a_x(t,x^t,\xi^t)-\eta a_\xi(t,x^t,\xi^t)
 \]
 Of course, $a_2$ depends also on $x,\xi$ but we omit these parameters in the notation, for brevity. Observe that $a_2$ is still in $S^{(2)}_{a}$ and moreover vanishes of second order at $(y,\eta)=(0,0)$. Therefore by a Taylor expansion at $(0,0)$ we see that it verifies
  \begin{equation}\label{s7}
 |\partial^\gamma_y\partial^\nu_\eta a_2(t,y,\eta)|\leq C^{|\gamma|+|\nu|+1}\gamma!\nu! (1+|y|+|\eta|)^{(2-|\gamma|-|\nu|)_+}
 \end{equation}
 where $(\cdot)_+$ denotes positive part, for some constant $C>0$ independent of $\gamma,\nu$ {\it and the parameters $x,\xi$}. \par
 Hence we are left to prove that the solution $G$ to \eqref{es3} 
is in $ S^1_1(\rd)$ {\it uniformly with respect to $t\in[0,T]$, $x,\xi\in\rd$}. Now, we already know from \cite[Theorem 5]{tataru} that $G\in C^1([0,T],\cS(\rd))$, hence the desired conclusion follows at once from the a priori estimate in Theorem \ref{enerinf} (\eqref{s7} implies the assumption {\bf (ii)$^\prime$} in that theorem, with a constant $C_1$ independent of $x,\xi$; the same uniformity therefore is inherited by the analyticity radius $\epsilon(t)$ in the conclusion). Hence \eqref{es1} is proved for $\alpha=\beta=0$. \par
The proof of \eqref{es1} can now be carried on by induction on $|\alpha|+|\beta|$. We first compute the gradients $\partial_x G$ and $\partial_\xi G$.  \par
We observe that the definition \eqref{fase} implies that\footnote{The left and right-hand side concide for $t=0$ and have the same time derivative: from \eqref{fase} we get $\frac{d}{dt} d\psi(t)=d\frac{d}{dt}\psi(t)=\xi^t da_{\xi}-a_x dx^t$, which equals $\frac{d}{dt}(\xi^tdx^t-\xi dx)$ by \eqref{sistema} (all functions are understood evaluated at $(t,x^t,\xi^t)$).}
$d\psi(t)=\xi^t dx^t-\xi dx$, that is 
\begin{equation}\label{es4}
\partial_x \psi= \xi^t \partial_x x^t-\xi,\qquad \partial_\xi \psi=\xi^t\partial_\xi x^t,
\end{equation}
Using the first equation in \eqref{es4} and the chain rule we get
\begin{align}\label{es5}
\partial_x G&=(-i\partial_x \psi-i\partial_x \xi^t y-i\xi)G+\partial_x x^t e^{-i\psi(t,x,\xi)}\pi(x^t,\xi^t)^\ast \partial_y S(t,0)[\pi(x,\xi)g]\\
&\qquad\qquad\qquad\qquad\qquad\qquad-e^{-i\psi(t,x,\xi)}\pi(x^t,\xi^t)^\ast S(t,0)[\pi(x,\xi)\partial_yg]\nonumber \\
&=\partial_x x^t \partial_y G-i\partial_x \xi^t y G-e^{-i\psi(t,x,\xi)}\pi(x^t,\xi^t)^\ast S(t,0)[\pi(x,\xi)\partial_y g](y).\nonumber
\end{align}
Similarly, using the second equation in \eqref{es4} we get
\begin{align}\label{es6bis}
\partial_{\xi} G&=(-i\partial_\xi \psi-i\partial_\xi \xi^ty)G+\partial_\xi x^te^{-i\psi(t,x,\xi)}\pi(x^t,\xi^t)^\ast\partial_y S(t,0)[\pi(x,\xi)g](y)\\
&\qquad\qquad\qquad\qquad\qquad +ie^{-i\psi(t,x,\xi)}\pi(x^t,\xi^t)^\ast S(t,0)[\pi(x,\xi) y g](y)\nonumber\\
&=\partial_\xi x^t \partial_y G-i\partial_\xi \xi^t y G+ie^{-i\psi(t,x,\xi)}\pi(x^t,\xi^t)^\ast S(t,0)[\pi(x,\xi)y g](y).\nonumber
\end{align}

Now,  it follows from {\bf (ii)} that for the Hamiltonian flow the following estimates hold true, for new constants $C_{\alpha,\beta}$:
\begin{equation}\label{regol}
|\partial^\alpha_x\partial^\beta_\xi x^t|+ |\partial^\alpha_x\partial^\beta_\xi \xi^t|\leq C_{\alpha,\beta},\quad |\alpha|+|\beta|\geq 1,\ t\in[0,T],\ x,\xi\in\rd.
\end{equation}
Hence, we see by induction on $|\alpha|+|\beta|$ and the formulas \eqref{es5} and \eqref{es6bis} that the partial derivatives $\partial^\alpha_x\partial^\beta_\xi G$ are finite sums of terms of the form 
\[
y^\delta \partial^\gamma_y G,\quad |\gamma+\delta|\leq |\alpha+\beta|,
\]
\[
y^\delta \partial^\gamma_y \Big(e^{-i\psi(t,x,\xi)}\pi(x^t,\xi^t)^\ast S(t,0)[\pi(x,\xi)y^\mu \partial^\nu_y g]\Big),\quad |\gamma+\delta+\mu+\nu|\leq |\alpha+\beta|,
\]
possibly multiplied by functions of $t,x,\xi$ having bounded derivatives of every order with respect to $x,\xi$.\par Since $S^1_1(\rd)$ is preserved by derivation and multiplication by polynomials, it follows from the result already proved for $\alpha=\beta=0$ that the estimates \eqref{es1} hold for every $\alpha,\beta$. This concludes the proof. 
\section{Exponential sparsity}
The following characterizations of the space $S^1_1(\rd)$ will be used often in the following and can be found in \cite[Proposition 3.12]{GZ} and \cite[Theorem 6.1.6]{nr}.\par Recall that $V_g f(z)=\langle f,\pi(z) g\rangle$.
\begin{proposition}\label{prop}
Let $g\in S^1_1(\rd)\setminus\{0\}$. Then for $f\in (S^1_1)'(\rd)$ the following conditions are equivalent.\par\medskip
(a) $f\in S^1_1(\rd)$.\par
(b) There exist constants $C,\epsilon>0$ such that
\[
|f(x)|\leq C e^{-\epsilon |x|}\qquad |\widehat{f}(\xi)|\leq C e^{-\epsilon|\xi|},\quad \forall x,\xi\in\rd.
\]
\indent (c) There exist constants $C,\epsilon>0$ such that
\[
|V_g f(z)|\leq C e^{-\epsilon|z|},\quad z\in\rdd.
\]
The above constants $C,\epsilon$ are uniform when $f$ varies in bounded subsets of $ S^1_1(\rd)$. 
\end{proposition}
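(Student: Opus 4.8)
The plan is to establish the chain (b) $\Leftrightarrow$ (a) together with (a) $\Rightarrow$ (c) $\Rightarrow$ (a), which makes all three conditions equivalent. The equivalence (a) $\Leftrightarrow$ (b) is the classical Gelfand--Shilov characterization of $S^1_1(\rd)$: (a) $\Rightarrow$ (b) follows by summing the defining estimates against $\sum_{\alpha}\epsilon^{|\alpha|}/\alpha!$, which for small $\epsilon$ converges and produces the factor $e^{\epsilon(|x_1|+\cdots+|x_d|)}$ (the companion bound for $\widehat f$ uses in addition $\|\xi^{\beta}\widehat f\|_{L^{\infty}}\le\|\partial^{\beta}f\|_{L^{1}}$ together with the exponential decay of $\partial^{\beta}f$ that the same summation yields), whereas (b) $\Rightarrow$ (a) is the one genuinely subtle step: it rests on the holomorphic extension of $f$ to a strip $\{|\operatorname{Im}x|<\epsilon\}$ furnished by the exponential decay of $\widehat f$, a Phragm\'en--Lindel\"of argument ensuring that the decay $|f(x)|\le Ce^{-\epsilon|x|}$ on $\rd$ persists, with a smaller rate, throughout a thinner strip, and Cauchy estimates bounding $\partial^{\beta}f$; see \cite{GS} and \cite[Section~6.1]{nr}. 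Thus only the two time-frequency implications (a) $\Rightarrow$ (c) and (c) $\Rightarrow$ (a) really require work here, and that is where I would concentrate.

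For (a) $\Rightarrow$ (c): note first that (a) directly yields $|\partial^{\beta}f(x)|\le C^{|\beta|+1}\beta!\,e^{-\epsilon|x|}$, and likewise for $g$ (sum $\epsilon^{|\alpha|}|x^{\alpha}\partial^{\beta}f(x)|/\alpha!$ over $\alpha$). With the paper's conventions $V_gf(x,\xi)=\int_{\rd}f(t)\overline{g(t-x)}\,e^{-i\xi t}\,dt=\widehat{h_{x}}(\xi)$ where $h_{x}(t)=f(t)\overline{g(t-x)}$. By the Leibniz rule and the decay bounds above, $|\partial^{\nu}h_{x}(t)|\le C^{|\nu|+2}\nu!\,e^{-\epsilon|t|}e^{-\epsilon|t-x|}$, the factorials staying under control via the identity $\sum_{\mu\le\nu}\binom{\nu}{\mu}\mu!(\nu-\mu)!=\nu!\prod_{j}(\nu_{j}+1)\le 2^{|\nu|}\nu!$. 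Integrating in $t$ and using $\int_{\rd}e^{-\epsilon|t|}e^{-\epsilon|t-x|}\,dt\le C\,e^{-\epsilon|x|/2}$ gives $\|\partial^{\nu}h_{x}\|_{L^{1}}\le C^{|\nu|+1}\nu!\,e^{-\epsilon|x|/2}$, hence $|\xi^{\nu}V_gf(x,\xi)|=|\widehat{\partial^{\nu}h_{x}}(\xi)|\le C^{|\nu|+1}\nu!\,e^{-\epsilon|x|/2}$. Multiplying by $t^{|\nu|}/\nu!$ and summing over $\nu$ (so the monomials $|\xi^{\nu}|$ sum to $e^{t(|\xi_1|+\cdots+|\xi_d|)}$) one gets $|V_gf(x,\xi)|\le C'\,e^{-t(|\xi_1|+\cdots+|\xi_d|)}e^{-\epsilon|x|/2}$ for every $t<1/C$; fixing a small such $t$ and using $|\xi_1|+\cdots+|\xi_d|\ge|\xi|$ and $|x|+|\xi|\ge|(x,\xi)|$ yields (c).

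For (c) $\Rightarrow$ (a): condition (c) implies $V_gf\in L^{2}(\rdd)$, hence (by a standard argument) $f\in L^{2}(\rd)$, so the inversion formula $f=c\int_{\rdd}V_gf(z)\,\pi(z)g\,dz$ ($c=c(g)>0$) holds in $L^{2}$, and the bound below legitimates differentiation under the integral. For $z=(x_{0},\xi_{0})$, expand $x^{\gamma}\partial_{x}^{\nu}\bigl[\pi(z)g\bigr](x)$ by Leibniz in the factors $e^{i\xi_{0}x}$ and $g(\cdot-x_{0})$ and write $x^{\gamma}=\bigl((x-x_{0})+x_{0}\bigr)^{\gamma}$: this gives a finite sum of terms $\binom{\nu}{\mu}\binom{\gamma}{\rho}\xi_{0}^{\mu}x_{0}^{\gamma-\rho}e^{i\xi_{0}x}(x-x_{0})^{\rho}(\partial^{\nu-\mu}g)(x-x_{0})$ ($\mu\le\nu$, $\rho\le\gamma$), each bounded in modulus, using $g\in S^1_1(\rd)$, by $\binom{\nu}{\mu}\binom{\gamma}{\rho}|\xi_{0}|^{|\mu|}|x_{0}|^{|\gamma-\rho|}C^{|\rho|+|\nu-\mu|+1}\rho!(\nu-\mu)!$. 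Multiplying by $|V_gf(z)|\le Ce^{-\epsilon|x_{0}|}e^{-\epsilon|\xi_{0}|}$ (a consequence of (c)) and integrating in $z$, the $x_{0}$- and $\xi_{0}$-integrals decouple; using $\int_{\rd}e^{-\epsilon|\eta|}|\eta|^{k}\,d\eta\le C^{k+1}k!$ together with $|\mu|!\le d^{|\mu|}\mu!$ and $|\gamma-\rho|!\le d^{|\gamma-\rho|}(\gamma-\rho)!$, and summing over $\mu$ and $\rho$ with the exact identities $\sum_{\mu\le\nu}\binom{\nu}{\mu}\mu!(\nu-\mu)!=\nu!\prod_{j}(\nu_{j}+1)$ (and its analogue in $\rho$), the factorials collapse to $\gamma!\nu!$ and one obtains $\|x^{\gamma}\partial^{\nu}f\|_{L^{\infty}}\le C^{|\gamma|+|\nu|+1}\gamma!\nu!$, i.e.\ $f\in S^1_1(\rd)$. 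The uniformity assertion then follows by tracking constants: on a bounded subset of $S^1_1(\rd)$ the defining constants are uniform, hence so is the decay of $V_gf$ (by (a) $\Rightarrow$ (c)), hence so are $C$ and $\epsilon$ throughout.

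The main obstacle, in both time-frequency implications, is the factorial bookkeeping. A careless estimate --- e.g.\ bounding $|\xi_{0}|^{|\mu|}|x_{0}|^{|\gamma-\rho|}\le(1+|z|)^{|\mu|+|\gamma-\rho|}$ and taking the worst exponent $|\gamma|+|\nu|$ before integrating, or not keeping the Leibniz sums factored --- introduces a spurious factor $(|\gamma|+|\nu|)!$ and only proves $f\in S^2_2(\rd)$, the next Gelfand--Shilov space up. The remedy is exactly what is indicated above: keep all Leibniz expansions fully factored so the binomial coefficients recombine through $\binom{\nu}{\mu}\mu!(\nu-\mu)!=\nu!$, and pair each monomial $|\xi_{0}|^{|\mu|}$ (respectively $|x_{0}|^{|\gamma-\rho|}$) with its own one-dimensional integral $\int_{\rd}e^{-\epsilon|\eta|}|\eta|^{k}\,d\eta$ before performing the $\mu$- and $\rho$-summations. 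A secondary, non-vacuous point is to verify at the outset that (c) forces $f\in L^{2}(\rd)$ and that the $L^{2}$-valid inversion formula may be differentiated under the integral once the absolute convergence above is in hand.
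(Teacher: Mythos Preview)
The paper does not give a proof of this proposition; it simply quotes it as a known result, citing \cite[Proposition 3.12]{GZ} and \cite[Theorem 6.1.6]{nr}. So there is no ``paper's own proof'' to compare against.

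Your argument is essentially correct and is in the spirit of the proofs in those references. The equivalence (a) $\Leftrightarrow$ (b) is indeed the classical Gelfand--Shilov theorem, and your sketch of (b) $\Rightarrow$ (a) via holomorphic extension, Phragm\'en--Lindel\"of and Cauchy estimates is the standard route. For the time-frequency implications your computations are sound: in (a) $\Rightarrow$ (c) the Leibniz estimate on $h_x=f\,\overline{T_xg}$ and the convolution bound $\int e^{-\epsilon|t|}e^{-\epsilon|t-x|}\,dt\lesssim e^{-\epsilon|x|/2}$ are correct, and the resummation in $\nu$ (convergent for $t<1/C$) gives exponential decay in $\xi$; in (c) $\Rightarrow$ (a) the decomposition $x^\gamma=((x-x_0)+x_0)^\gamma$ combined with the exact identity $\binom{\nu}{\mu}\mu!(\nu-\mu)!=\nu!$ is precisely what keeps the factorials from blowing up, as you rightly emphasize. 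The only point that could use one more word is the passage ``(c) implies $V_gf\in L^2$, hence $f\in L^2$'': for $f\in(S^1_1)'(\rd)$ this follows e.g.\ from the modulation-space identity $M^{2,2}=L^2$ (valid with Gelfand--Shilov windows, cf.\ \cite{medit,GZ}), or equivalently from the fact that the inversion formula $f=c_g\int V_gf(z)\,\pi(z)g\,dz$ already holds weakly in $(S^1_1)'$ and the exponential decay in (c) makes it converge in $L^2$. With that clarified, your proof stands on its own and supplies what the paper merely cites.
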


We have then the following sparsity result. 
\begin{theorem}\label{sparsity}
 Let $g\in S^1_1(\rd)$, and assume   ${\bf (i)-(iii)}$ in the Introduction. There exist constants $C,\epsilon>0$ such that
\begin{equation}\label{sparsityeq}
|\langle S(t,0)\pi(z)g,\pi(w)g\rangle|\leq C\exp\big(-\epsilon|w-\chi_t(z)|\big),\quad w,z\in\rdd,\ 0\leq t\leq T,
\end{equation}
where $\chi_t(x,\xi)=(x^t,\xi^t)$ is the corresponding canonical transformation.
\end{theorem}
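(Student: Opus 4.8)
The plan is to substitute the global integral representation of Theorem~\ref{mainteo} (with $s=0$) into the Gabor matrix entry. Applying \eqref{rap} to $f=\pi(z)g$ and then pairing with $\pi(w)g$ gives
\[
\langle S(t,0)\pi(z)g,\pi(w)g\rangle=\int_{\rdd} e^{i\psi(t,x,\xi)}\,\langle \pi(z)g,\pi(x,\xi)g\rangle\,\langle \pi(x^t,\xi^t)G(t,x,\xi,\cdot),\pi(w)g\rangle\,dx\,d\xi,
\]
the interchange of the integral with the inner product being legitimate since the representation \eqref{rap} converges absolutely. Using the composition rule $\pi(u)\pi(v)=e^{iu_2v_1}\pi(u+v)$ and $\pi(u)^\ast=e^{iu_1u_2}\pi(-u)$, the two inner products in the integrand equal, up to unimodular factors, $V_gg\big((x,\xi)-z\big)$ and $V_gG(t,x,\xi,\cdot)\big(w-(x^t,\xi^t)\big)$ respectively, so only their moduli will matter.

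Then I would bring in the exponential decay. Since $g\in S^1_1(\rd)$, Proposition~\ref{prop}(c) gives $|V_gg(\zeta)|\le C_1 e^{-\epsilon_1|\zeta|}$. By Theorem~\ref{mainteo} the family $\{G(t,0,x,\xi,\cdot):t\in[0,T],\ x,\xi\in\rd\}$ lies in a bounded subset of $S^1_1(\rd)$, so the uniformity clause in Proposition~\ref{prop} yields $|V_gG(t,x,\xi,\cdot)(\zeta)|\le C_2 e^{-\epsilon_2|\zeta|}$ with $C_2,\epsilon_2$ independent of $t,x,\xi$. Writing $\zeta=(x,\xi)$ and $\chi_t(\zeta)=(x^t,\xi^t)$, this produces
\[
|\langle S(t,0)\pi(z)g,\pi(w)g\rangle|\le C_1C_2\int_{\rdd} e^{-\epsilon_2|w-\chi_t(\zeta)|}\,e^{-\epsilon_1|\zeta-z|}\,d\zeta.
\]

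It remains to reduce the right-hand side to $C\exp(-\epsilon|w-\chi_t(z)|)$. By \eqref{regol} the first-order derivatives of $\chi_t$ are bounded uniformly for $t\in[0,T]$, hence $\chi_t$ is globally Lipschitz with a constant $L$ independent of $t$; thus $|w-\chi_t(z)|\le|w-\chi_t(\zeta)|+L|\zeta-z|$. Keeping a small fraction of the decay in $|\zeta-z|$ aside for integrability and using this triangle inequality on the remainder, one gets $e^{-\epsilon_2|w-\chi_t(\zeta)|}e^{-\epsilon_1|\zeta-z|}\le C\, e^{-\epsilon|w-\chi_t(z)|}e^{-\delta|\zeta-z|}$ for suitable $\epsilon,\delta>0$ depending only on $\epsilon_1,\epsilon_2,L$; integrating $e^{-\delta|\zeta-z|}$ over $\rdd$ gives a finite constant and hence \eqref{sparsityeq}.

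The only genuinely delicate point is the uniformity in $t$ of every constant involved: it rests on the uniform $S^1_1$-boundedness of $G(t,0,x,\xi,\cdot)$ from Theorem~\ref{mainteo} (ultimately a consequence of the infinite-order energy estimate, Theorem~\ref{enerinf}), on the uniform-in-$t$ bound \eqref{regol} for the Hamiltonian flow, and on the last sentence of Proposition~\ref{prop}. Everything else is routine time-frequency bookkeeping; in particular one needs only the case $\alpha=\beta=0$ of the conclusion of Theorem~\ref{mainteo}.
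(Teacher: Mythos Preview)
Your proof is correct and follows essentially the same route as the paper: plug the representation of Theorem~\ref{mainteo} into the Gabor matrix element, use Proposition~\ref{prop} (with the uniformity clause) to bound both STFT factors by exponentials, and then estimate the resulting integral. The only cosmetic difference is in the last step: the paper uses the bi-Lipschitz property of $\chi_t$ to rewrite $|w-\chi_t(\zeta)|\asymp|\zeta-\chi_t^{-1}(w)|$, reducing the integral to a convolution of exponentials (citing an external lemma), whereas you use only the forward Lipschitz bound via the triangle inequality $|w-\chi_t(z)|\le|w-\chi_t(\zeta)|+L|\zeta-z|$ and peel off a fraction of the decay; both arguments are equivalent and yield the same conclusion.
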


\begin{proof}
Using the integral representation in Theorem \ref{mainteo} we get
\begin{multline*}
|\langle S(t,0)\pi(z)g,\pi(w) g\rangle|\\
\leq \int_{\rdd}  |\langle \pi(x^t,\xi^t)G(t,s,x,\xi,\cdot),\pi(w) g\rangle||\langle \pi(z)g,\pi(x,\xi)g\rangle|dx\,d\xi.
\end{multline*}
By Proposition \ref{prop} we can continue the estimate as
\[
\leq C \int_{\rdd} \exp(-\epsilon|(x^t,\xi^t)-w|)\exp(-\epsilon|(x,\xi)-z|)dx\,d\xi.
\]
It follows from the assumption {\bf (ii)} that the canonical transformation $\chi_t$ is Lipschitz together with its inverse\footnote{This can be seen as follows: the functions $(x^t,\xi^t)=\chi_t(x,\xi)$ satisfy the Hamiltonian system for every fixed $x,\xi$. By differentiating the system with respect to $x,\xi$ and applying the chain rule we see that the Jacobian matrix $\chi'_t$ satisfies a linear system with bounded coefficients, uniformly with respect to $x,\xi\in\rd$, $t\in[0,T]$, with initial condition $\chi'_0={\rm Id}$. Hence the entries of that matrix are bounded with respect to $x,\xi\in\rd$, $t\in[0,T]$. The same conclusion holds for $\chi_t^{-1}$, because ${\rm det}\, \chi'_t=1$, being $\chi_t$ symplectic.}, uniformly with respect to $t\in [0,T]$. Since $|(x^t,\xi^t)-w|=|\chi_t(x,\xi)-w|\asymp |(x,\xi)-\chi_t^{-1}(w)|$, it is easy to estimate the above convolution of exponentials (cf.\ \cite[Lemma 4.3]{cnr2}) as
\[
\leq C' \exp(-\epsilon' |z-\chi_t^{-1}(w)|)\leq  C' \exp(-\epsilon'' |w-\chi_t(z)|).
\]
\end{proof}

As a consequence we obtain at once continuity results on the so-called modulation spaces.
We first recall their definition.\par 
We have given in \eqref{stft} the definition of the short-time Fourier transform $V_gf$ of a distribution $f$. We then consider a weight $v$ which is
continuous, positive,  even, submultiplicative function
(submultiplicative weight), i.e., $v(0)=1$, $v(z) =
v(-z)$, and $ v(z_1+z_2)\leq v(z_1)v(z_2)$, for all $z,
z_1,z_2\in\rd.$  Submultiplicativity implies that $v(z)$ is \emph{dominated} by an exponential function, i.e.
\begin{equation} \label{weight}
 \exists\, C, k>0 \quad \mbox{such\, that}\quad  1\leq v(z) \leq C e^{k |z|},\quad z\in \rd.
\end{equation}

For example, every weight of the form
\begin{equation} \label{BDweight}
v(z) =   e^{s|z|^b} (1+|z|)^a \log ^r(e+|z|)
\end{equation}
 for parameters $a,r,s\geq 0$, $0\leq b \leq 1$ satisfies the
above conditions.\par

We denote by $\mathcal{M}_v(\rd)$ the space of $v$-moderate weights on $\rd$;
these  are measurable positive functions $m$ satisfying $m(z+\zeta)\leq C
v(z)m(\zeta)$ for every $z,\zeta\in\rd$. When dealing with possibly exponential weights it is convenient to consider windows in the space $\Sigma^1_1(\rd)$ of functions $f$ satisfying
\[
|x^\alpha\partial^\beta f(x)|\leq C_0C^{|\alpha|+|\beta|+1}\alpha!\beta!, \quad \alpha,\beta\in\bN^d,\ x\in\rd
\]
for some $C_0>0$ and every $C>0$. We also write ${\Sigma^1_1}'(\rd)\supset \cS'(\rd) $ for the corresponding dual space (ultradistributions).
\begin{definition}  \label{prva}
Given  $g\in\Sigma^1_1(\rd)$, a  weight
function $m\in\cM _v(\rdd)$, and $1\leq p,q\leq
\infty$, the {\it
  modulation space} $M^{p,q}_m(\Ren)$ consists of all tempered
ultra-distributions $f\in(\Sigma^1_1)' (\rd) $ such that $V_gf\in L^{p,q}_m(\Renn )$
(weighted mixed-norm spaces). The norm on $M^{p,q}_m(\rd)$ is
\begin{equation}\label{defmod}
\|f\|_{M^{p,q}_m}=\|V_gf\|_{L^{p,q}_m}=\left(\int_{\Ren}
  \left(\int_{\Ren}|V_gf(x,\o)|^pm(x,\o)^p\,
    dx\right)^{q/p}d\o\right)^{1/q}  \,
\end{equation}
(obvious changes if $p=\infty$ or $q=\infty$).
\end{definition}
For $f,g\in \Sigma^1_1(\rd)$ the above integral is convergent and thus $\Sigma^1_1(\rd)\subset M^{p,q}_m(\rd) $, $1\leq p,q \leq\infty$, cf.\ \cite{medit}, with dense inclusion when $p,q<\infty$, cf.\ \cite{elena07}. The spaces $M^{p,q}_m(\rd)$ are Banach spaces and it worth noticing that every nonzero $g\in \Sigma^1_1(\rd)$ gives an equivalent norm in \eqref{defmod}; hence $M^{p,q}_m(\Ren)$ is independent on the choice of $g\in  \Sigma^1_1(\rd)$. 

 We refer to \cite {fei} and \cite[Chapter 11]{book} for the details and applications of modulation spaces to Time-frequency Analysis and to \cite[Chapter 6]{baoxiang} for a PDEs perspective.\par
We deduce from Theorem \ref{sparsity} the following continuity result.
\begin{corollary}\label{modsp}
Let $m\in \mathcal{M}_v(\rdd)$ and assume the weight $v$ satisfies
 \[
 \int_{\rdd} v(z)\exp(-\epsilon|z|)\, dz<\infty,
 \]
for every $\epsilon>0$.\par
Under the same hypotheses as in Theorem \ref{sparsity}, the propagator $S(t,0)$ defines a bounded operator ${M}_{m\circ\chi_t}^{p,p}(\R^d)\to {M}_{m}^{p,p}(\R^d)$ for every $1\leq p\leq\infty$, $0\leq t\leq T$.
\end{corollary}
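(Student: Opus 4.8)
The plan is to deduce the corollary from the sparsity estimate \eqref{sparsityeq} by a Schur/Young‐type argument on the Gabor matrix of $S(t,0)$, using the moderateness of the weight $m$ to transport it along the canonical transformation. First I would fix the window $g\in S^1_1(\rd)\subset\Sigma^1_1(\rd)$ already used in Theorem \ref{sparsity} and employ it to compute all modulation‐space norms (this is legitimate since $M^{p,p}_m$ is independent of the admissible window), so that $\|h\|_{M^{p,p}_m}=\|m\,V_g h\|_{L^p(\rdd)}$. After rescaling $g$ so that $\|g\|_{L^2}=(2\pi)^{-d/2}$ (which only changes the constants), I apply $S(t,0)$ to the reproducing formula \eqref{invformula} and take the STFT, obtaining for $f$ in a dense subspace
\begin{equation*}
V_g(S(t,0)f)(w)=\int_{\rdd}\langle S(t,0)\pi(z)g,\pi(w)g\rangle\,V_gf(z)\,dz,\qquad w\in\rdd,
\end{equation*}
so that $V_g(S(t,0)f)$ is the image of $V_gf$ under the integral operator with kernel $K_t(w,z)=\langle S(t,0)\pi(z)g,\pi(w)g\rangle$.

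Next I would insert the two key inputs: by Theorem \ref{sparsity}, $|K_t(w,z)|\le C\exp(-\epsilon|w-\chi_t(z)|)$ uniformly in $t\in[0,T]$, and, since $m\in\mathcal{M}_v(\rdd)$, $m(w)\le C\,v(w-\chi_t(z))\,m(\chi_t(z))$. Combining them gives
\begin{equation*}
m(w)\,|V_g(S(t,0)f)(w)|\le C\int_{\rdd}h\big(w-\chi_t(z)\big)\,(m\circ\chi_t)(z)\,|V_gf(z)|\,dz,
\end{equation*}
where $h(u)=v(u)\exp(-\epsilon|u|)$. The hypothesis $\int_{\rdd}v(z)\exp(-\epsilon'|z|)\,dz<\infty$ for every $\epsilon'>0$, applied with $\epsilon'=\epsilon$, is precisely what ensures $h\in L^1(\rdd)$.

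Finally I would remove the nonlinearity of $\chi_t$ by a change of variables. As recorded in the proof of Theorem \ref{sparsity}, $\chi_t$ is a symplectomorphism, hence a bi‐Lipschitz diffeomorphism with $\det\chi_t'\equiv1$; in particular $m\circ\chi_t$ is again a moderate weight (with respect to the submultiplicative weight $z\mapsto\sup_{|u|\le L|z|}v(u)$, with $L$ a uniform Lipschitz constant), so $M^{p,p}_{m\circ\chi_t}(\rd)$ is a genuine modulation space. Substituting $\zeta=\chi_t(z)$ (volume‐preserving) turns the right‐hand side above into $C\big(h*((m\circ\chi_t)\,|V_gf|)\circ\chi_t^{-1}\big)(w)$, and Young's inequality $\|h*F\|_{L^p}\le\|h\|_{L^1}\|F\|_{L^p}$, valid for all $1\le p\le\infty$, together with one more volume‐preserving change of variables, yields
\begin{equation*}
\|S(t,0)f\|_{M^{p,p}_m}=\|m\,V_g(S(t,0)f)\|_{L^p}\le C\|h\|_{L^1}\|(m\circ\chi_t)\,V_gf\|_{L^p}=C\|h\|_{L^1}\|f\|_{M^{p,p}_{m\circ\chi_t}}.
\end{equation*}
For $p<\infty$ the estimate extends from the dense subspace $\Sigma^1_1(\rd)$ to all of $M^{p,p}_{m\circ\chi_t}(\rd)$ by continuity; for $p=\infty$ one interprets the reproducing‐formula identity in the weak-$*$ sense (coorbit theory) or argues by duality.

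The only genuinely delicate point is the interplay between the weight and the \emph{nonlinear} canonical transformation $\chi_t$ appearing in the exponent of \eqref{sparsityeq}. This is resolved by the moderateness of $m$, which lets one transfer the weight across $\chi_t$ at the cost of the factor $v(w-\chi_t(z))$, and by the fact that $\chi_t$ is bi‐Lipschitz and volume‐preserving, which reduces the weighted kernel estimate to an honest convolution; the integrability assumption on $v$ then supplies the $L^1$ kernel $h$ that closes the argument.
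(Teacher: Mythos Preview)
Your proposal is correct and follows essentially the same route as the paper: write $V_g(S(t,0)f)$ as an integral operator with kernel the Gabor matrix, apply the sparsity bound \eqref{sparsityeq} together with the $v$-moderateness of $m$, perform the volume-preserving change of variables $\zeta=\chi_t(z)$, and conclude by Young's inequality $L^1\ast L^p\subset L^p$ using the integrability hypothesis on $v(\cdot)e^{-\epsilon|\cdot|}$. The one slip is the inclusion you wrote, ``$g\in S^1_1(\rd)\subset\Sigma^1_1(\rd)$'': in fact $\Sigma^1_1(\rd)\subset S^1_1(\rd)$, so to have a window that is simultaneously admissible for the modulation-space norm (Definition~\ref{prva} requires $g\in\Sigma^1_1$) and for the sparsity estimate (Theorem~\ref{sparsity} requires $g\in S^1_1$) you should pick $g\in\Sigma^1_1(\rd)\setminus\{0\}$, exactly as the paper does; the rest of your argument is unaffected.
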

\begin{proof}
Let $g \in \Sigma_1^1(\rd)$ with $\| g \|_{L^2}=(2\pi)^{-d/2}$.
By the inversion formula \eqref{invformula}, we have
\[
V_g (S(t,0) f)(u)=\int_{\R^{2d}}\langle S(t,0) \pi(z) g,
\pi(u)g\rangle \, V_g f(z) \, dz.
\]
It therefore suffices to prove that the map $M$
defined by
\begin{equation}\label{e1}
M[G](u)=\int_{\R^{2d}}\langle S(t,0)\pi(z) g,
\pi(u) g \rangle \, G(z) \, dz.
\end{equation}
is continuous from $L^{p,p}_{m\circ\chi_t}(\rdd)$ into $L^{p,p}_m(\rdd)$. Now, by Theorem \ref{sparsity} we have
\begin{equation}\label{weightesp}
|m(u)M[G](u)|\leq C \int_{\rdd} v(u-\chi_t(z))\exp(-\epsilon|u-\chi_t(z))|m(\chi_t(z))G(z)|\, dz.
\end{equation}
Then one performs the change of variable $z=\chi_t^{-1}(\tilde{z})$, whose Jacobian is $=1$, and concludes by the convolution relation $L^1\ast L^{p}\subset L^{p}$ in $\rdd$, because $v(z)\exp(-\epsilon|z|)$ is integrable by hypothesis.
\end{proof}

\begin{corollary}\label{gelfand} Under the  hypotheses of Theorem \ref{sparsity}, the propagator $S(t,0)$ is bounded on $S^1_1(\rd)$, $0\leq t\leq T$. \end{corollary}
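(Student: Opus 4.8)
The plan is to deduce the boundedness of $S(t,0)$ on $S^1_1(\rd)$ directly from the sparsity estimate of Theorem \ref{sparsity}, using the time-frequency characterization of $S^1_1$ given in Proposition \ref{prop}(c). Fix once and for all a window $g\in S^1_1(\rd)$ with $\|g\|_{L^2}=(2\pi)^{-d/2}$, so that the inversion formula \eqref{invformula} applies. Given $f\in S^1_1(\rd)$, by Proposition \ref{prop}(c) there exist $C,\epsilon>0$ with $|V_g f(z)|\le Ce^{-\epsilon|z|}$, and to conclude that $S(t,0)f\in S^1_1(\rd)$ it suffices, again by Proposition \ref{prop}(c), to produce constants $C',\epsilon'>0$ with $|V_g(S(t,0)f)(u)|\le C'e^{-\epsilon'|u|}$ for all $u\in\rdd$.

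First I would write, exactly as in the proof of Corollary \ref{modsp},
\[
V_g(S(t,0)f)(u)=\int_{\rdd}\langle S(t,0)\pi(z)g,\pi(u)g\rangle\,V_gf(z)\,dz,
\]
which is legitimate since $f\in S^1_1(\rd)\subset\cS(\rd)$. Then I would bound the kernel by Theorem \ref{sparsity}, $|\langle S(t,0)\pi(z)g,\pi(u)g\rangle|\le C\exp(-\epsilon_0|u-\chi_t(z)|)$ for suitable $C,\epsilon_0>0$ (uniform in $t\in[0,T]$), and the datum by $|V_gf(z)|\le Ce^{-\epsilon|z|}$, obtaining
\[
|V_g(S(t,0)f)(u)|\le C^2\int_{\rdd}\exp\big(-\epsilon_0|u-\chi_t(z)|\big)\exp(-\epsilon|z|)\,dz.
\]
Now I would perform the change of variables $z=\chi_t^{-1}(\tilde z)$, whose Jacobian equals $1$ since $\chi_t$ is symplectic, and use that $\chi_t^{-1}$ is Lipschitz uniformly in $t\in[0,T]$ (the footnote to Theorem \ref{sparsity}), so that $|z|=|\chi_t^{-1}(\tilde z)|\gtrsim |\tilde z|$ up to an additive constant, hence $\exp(-\epsilon|z|)\le C''\exp(-\epsilon'|\tilde z|)$ for some $\epsilon'>0$. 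This reduces the integral to a convolution of two exponentials in $\rdd$, and the elementary estimate (e.g. \cite[Lemma 4.3]{cnr2}) $\int_{\rdd}e^{-a|u-\tilde z|}e^{-b|\tilde z|}\,d\tilde z\le C_{a,b}\,e^{-\min(a,b)|u|}$ yields the required bound $|V_g(S(t,0)f)(u)|\le C'e^{-\epsilon'|u|}$.

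Since Proposition \ref{prop} also records that the constants $C,\epsilon$ there are uniform on bounded subsets of $S^1_1(\rd)$, and all the intermediate constants above depend only on the input constants (not on $f$), the map $f\mapsto S(t,0)f$ sends bounded subsets of $S^1_1(\rd)$ to bounded subsets, which is exactly the continuity of $S(t,0)$ on the (LB)-space $S^1_1(\rd)$. There is essentially no obstacle here: the whole content has been placed in Theorems \ref{mainteo} and \ref{sparsity} and in Proposition \ref{prop}; the only point requiring a line of care is the uniform-in-$t$ Lipschitz control of $\chi_t^{-1}$ needed to transfer the exponential decay under the change of variables, and that has already been dispatched in the footnote to Theorem \ref{sparsity}.
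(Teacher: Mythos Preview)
Your proof is correct and follows essentially the same approach as the paper's second argument: reduce via Proposition \ref{prop}(c) to an exponential bound on $V_g(S(t,0)f)$, apply the sparsity estimate, and use the bi-Lipschitz property of $\chi_t$ to control the resulting convolution of exponentials (the paper replaces $|u-\chi_t(z)|\asymp|\chi_t^{-1}(u)-z|$ rather than changing variables, but this is cosmetic). The paper also remarks that the result already follows directly from the energy estimate of Theorem \ref{enerinf}.
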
 \begin{proof} This already follows from Theorem \ref{enerinf}. Alternatively, it can be deduced from the exponential sparsity as well, using the same pattern of the previous proof. Namely, let $g \in S_1^1(\rd)\setminus\{0\}$. We have $f\in S^1_1(\rd)$ if and only if the estimate $|V_g f(z)|\lesssim e^{-h|z|}$, $z\in\rdd$, holds for some $h>0$ (Proposition \ref{prop}). Thus, using \eqref{sparsityeq}, we have 
\begin{align*} |V_g (S(t,0) f)(u)|&\lesssim \int_{\rdd} \exp(-\epsilon|u-\chi_t(z)|) \exp(-h|z|)\, dz\\ &\lesssim  \int_{\rdd} \exp(-\epsilon'|\chi_t^{-1}(u)-z|) \exp(-h|z|)\, dz\\ &\lesssim \exp (-\epsilon''|\chi_t^{-1}(u)|)\\ &\lesssim  \exp (-\epsilon'''|u|) \end{align*} where we used that $\chi_t $ is a bi-Lipschitz diffeomorphism. This estimate provides  the claim. \end{proof}
\begin{corollary}\label{gelfand2} Under the hypotheses of Theorem \ref{sparsity}, the propagator $S(t,0)$ is bounded on the dual space $(S^1_1)'(\rd)$, $0\leq t\leq T$. \end{corollary}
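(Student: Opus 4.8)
The plan is to deduce the statement from Corollary \ref{gelfand} by duality. Recall that $(S^1_1)'(\rd)$ is, by definition, the topological dual of $S^1_1(\rd)$, and that the propagator acts on it by transposition: for $f\in(S^1_1)'(\rd)$ and $\varphi\in S^1_1(\rd)$ one sets $\langle S(t,0)f,\varphi\rangle:=\langle f,{}^tS(t,0)\varphi\rangle$. Hence it suffices to check that the transpose ${}^tS(t,0)$ maps $S^1_1(\rd)$ continuously into itself; boundedness of the resulting operator on $(S^1_1)'(\rd)$ is then automatic, $S^1_1(\rd)$ being an $(LB)$-space (standard soft analysis).

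First I would identify the transpose. Since $a(t,\cdot)$ is real-valued, $a^w(t,x,D)$ is formally self-adjoint, so $S(t,0)$ is unitary on $L^2(\rd)$ and $S(t,0)^{-1}=S(t,0)^\ast=S(0,t)$, the backward propagator. With respect to the bilinear pairing this gives ${}^tS(t,0)=C\circ S(0,t)\circ C$, where $C\varphi=\overline\varphi$ is complex conjugation, a homeomorphism of $S^1_1(\rd)$. So the matter reduces to the boundedness of $S(0,t)$ on $S^1_1(\rd)$. But $S(0,t)$ is the propagator of the time-reversed Cauchy problem: setting $v(\tau,x):=u(t_0-\tau,x)$ turns a solution of $D_tu+a^w(t,x,D)u=0$ into a solution of $D_\tau v+\widetilde a^w(\tau,x,D)v=0$ with $\widetilde a(\tau,x,\xi):=-a(t_0-\tau,x,\xi)$, which is again real-valued and satisfies ${\bf (i)}$--${\bf (iii)}$ with the same constants. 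Thus Corollary \ref{gelfand}, applied to this equation, shows that $S(0,t):S^1_1(\rd)\to S^1_1(\rd)$ is bounded; hence so is ${}^tS(t,0)$, and the corollary follows. I expect the only point requiring care to be precisely this identification of ${}^tS(t,0)$ with the conjugated backward propagator and the verification that the time-reversed problem still falls under the standing hypotheses; everything else is formal.

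Alternatively, one can argue directly from the sparsity estimate \eqref{sparsityeq}, running the proof of Corollary \ref{gelfand} with the roles of decay and growth interchanged. Recall that $f\in(S^1_1)'(\rd)$ if and only if for every $\delta>0$ there is $C_\delta>0$ with $|V_gf(z)|\leq C_\delta e^{\delta|z|}$ on $\rdd$ (the dual counterpart of Proposition \ref{prop}). Writing, weakly, $V_g(S(t,0)f)(u)=\int_{\rdd}\langle S(t,0)\pi(z)g,\pi(u)g\rangle\,V_gf(z)\,dz$ and inserting \eqref{sparsityeq} together with this growth bound, one gets, using that $\chi_t$ is bi-Lipschitz uniformly in $t\in[0,T]$,
\[
|V_g(S(t,0)f)(u)|\lesssim\int_{\rdd}e^{-\epsilon|u-\chi_t(z)|}e^{\delta|z|}\,dz\lesssim\int_{\rdd}e^{-\epsilon'|\chi_t^{-1}(u)-z|}e^{\delta|z|}\,dz\lesssim e^{\delta'|u|},
\]
where, taking $\delta<\epsilon'$, the convolution of a rapidly decaying kernel against a slowly growing weight stays slowly growing and $\delta'\to0$ as $\delta\to0$. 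Since $\delta>0$ is arbitrary, $S(t,0)f\in(S^1_1)'(\rd)$, with the attendant continuity. Here the only mild subtleties are the justification of the weak reproducing identity for ultradistributions (via density of $S^1_1(\rd)$ and the continuity already at hand) and the bookkeeping of exponents; neither is a genuine obstacle.
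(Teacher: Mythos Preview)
Your primary argument---duality via the identification of the adjoint/transpose of $S(t,0)$ with the backward propagator, which is the forward propagator for a time-reversed equation still satisfying {\bf (i)}--{\bf (iii)}, and then invoking Corollary~\ref{gelfand}---is correct and is exactly the route taken in the paper. Your alternative direct argument from the sparsity estimate is also sound (and mirrors the proof of Corollary~\ref{gelfand} with decay replaced by sub-exponential growth), though the paper does not pursue it.
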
 
\begin{proof}
This is a consequence of the previous result, by duality. In fact, the propagator $S(t,s)$ is unitary on $L^2(\rd)$, hence $S(t,s)^\ast=S(t,s)^{-1}=\tilde{S}(T-s,T-t)$, $0\leq s\leq t\leq T$, where $\tilde{S}$ is the forward propagator for the equation $-D_t v+a^w(T-t,x,D)v=0$ ($u$ satisfies the equation in \eqref{equazione} if and only if $v(t)=u(T-t)$ is a solution of this one). On the other hand this last equation verifies the same assumptions as that in \eqref{equazione}. 
\end{proof} 
\section{Representation as classical FIO}

Assume now that for a fixed $t\in\bR$ the canonical transformation $(x,\xi)=\chi_t (y,\eta)$ satisfies the additional assumption
\begin{equation}\label{B3}
|{\rm det}\, \partial x/\partial y|>\delta,
\end{equation}
for a suitable $\delta>0$. Then from the mapping $\chi_t$ we can construct, as usual in the theory of FIOs (cf.\ e.g.\  \cite[Theorem
4.3.2.]{Rodino}), a phase function $\Phi=\Phi_t\in \cC^{\infty}(\rdd)$
  uniquely
determined up to a constant, which in the present global framework satisfies the following properties:
\begin{equation}\label{phasedecay}
|\partial_z^\alpha \Phi(z)|\leq C^{|\alpha|+1}\,\alpha !,\quad |\alpha|\geq 2,\quad z\in\rdd;
\end{equation}
\begin{equation}\label{detcond}
 \left|{\rm det}\,
\left(\frac{\partial^2\Phi}{\partial
x_i\partial \eta_l}\Big|_{
(x,\eta)}\right)\right|>\delta>0,\quad
 (x,\eta)\in \R^{2d},\quad\textrm{for some}\ \delta>0.
\end{equation}
The relationships between the map $(x,\xi)=\chi_t(y,\eta)$ is expressed by the equations
\begin{equation}\label{cantra} \left\{
               \begin{array}{l}
               y=\nabla_{\eta}\Phi(x,\eta)
               \\
              \xi=\nabla_{x}\Phi(x,\eta), \rule{0mm}{0.55cm}
               \end{array}
               \right.
\end{equation}
In fact, by
\eqref{B3} and the global inverse function theorem one can construct the map $\chi_t$ from $\Phi$.
Vice-versa, assuming \eqref{phasedecay} and \eqref{detcond} and solving
with respect to $(x,\xi)$ the above system we come back to the canonical transformation $(x,\xi)=\chi_t(y,\eta)$.\par
The following lemma, proved in \cite[Lemma 4.2]{cnr1} clarifies further the relation between the phase $\Phi
$ and the canonical transformation $\chi$ and will be used below.

\begin{lemma}\label{tre1}
With $\Phi$ and $\chi$ as above, we have
\begin{equation}\label{3mezzo}
|\nabla_x\Phi(x',\eta)-\eta'|+|\nabla_\eta\Phi(x',\eta)-x|
\asymp |\chi _1(x,\eta)-x'|+| \chi _2(x,\eta)-\eta'| \,\quad
\forall x,x',\eta, \eta' \in \rd \, .
\end{equation}
\end{lemma}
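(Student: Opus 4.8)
The plan is to reduce the equivalence \eqref{3mezzo} to a two-sided Lipschitz estimate for the two maps that connect $(x,\eta)$-coordinates to $(x',\eta')$-coordinates through the phase $\Phi$. Concretely, define
\[
F(x,\eta)=\big(\nabla_\eta\Phi(x,\eta),\nabla_x\Phi(x,\eta)\big),
\]
so that the defining relations \eqref{cantra} say precisely that $\chi(x,\eta)=(x',\eta')$ if and only if $F(x,\eta)=(x',\eta')$ in the sense that $x'=\chi_1(x,\eta)$ is the first slot and $\eta'$ matches the second slot after solving the system; more carefully, \eqref{cantra} reads $y=\nabla_\eta\Phi(x,\eta)$, $\xi=\nabla_x\Phi(x,\eta)$, so that if I keep the variables $(x,\eta)$ fixed and let $(x',\eta')$ vary, the left-hand side of \eqref{3mezzo} is $|\nabla_x\Phi(x,\eta)-\eta'|+|\nabla_\eta\Phi(x,\eta)-x|$ — wait, one must be careful about which slot is primed. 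First I would carefully set up the notation matching \eqref{cantra}: with $(x,\xi)=\chi(y,\eta)$ meaning $y=\nabla_\eta\Phi(x,\eta)$, $\xi=\nabla_x\Phi(x,\eta)$, I rename so that the point at which derivatives of $\Phi$ are evaluated is $(x',\eta)$ and the ``target'' pair is $(x,\eta')$, exactly as in the statement. Then the quantity on the left is the distance from $(x,\eta')$ to the image $\big(\nabla_\eta\Phi(x',\eta),\nabla_x\Phi(x',\eta)\big)$, and the quantity on the right is the distance from $(x,\eta')$ to $\chi(x',\eta)=(\chi_1(x',\eta),\chi_2(x',\eta))$.

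The key point is then that the map $(x',\eta)\mapsto\big(\nabla_\eta\Phi(x',\eta),\nabla_x\Phi(x',\eta)\big)$ and the map $(x',\eta)\mapsto\chi(x',\eta)$ differ only by a change of which variable is treated as independent, and both are bi-Lipschitz global diffeomorphisms of $\rdd$. Indeed, from \eqref{phasedecay} the Hessian of $\Phi$ is globally bounded, so $F$ has globally bounded derivative; from \eqref{detcond} the mixed Hessian $\partial^2\Phi/\partial x\partial\eta$ is uniformly invertible, which is exactly the nondegeneracy needed to solve the system \eqref{cantra} globally (via the global inverse function theorem, as already invoked in the text) and to conclude that the inverse is also Lipschitz. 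Hence I would argue: (i) both $(x,\eta')\mapsto\text{LHS-point}$ and $(x,\eta')\mapsto\chi(x',\eta)$, as $(x',\eta)$ varies with the other pair fixed — or rather, I fix one pair and vary the other — realize bi-Lipschitz correspondences, so distances in the source are comparable to distances in the target; (ii) at the matched point where LHS $=0$ one has, by \eqref{cantra}, RHS $=0$ as well, and vice versa. Combining the bi-Lipschitz bounds with this coincidence of zero sets yields the asymptotic equivalence $\asymp$ with constants depending only on the constants $C$ in \eqref{phasedecay} and $\delta$ in \eqref{detcond}.

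In more elementary terms, one can avoid invoking the inverse function theorem twice by writing everything through the single map $G(x',\eta):=\nabla_\eta\Phi(x',\eta)$ and its $\eta$-partner: given $(x,\eta')$, let $(x',\eta)$ be the unique solution of $\nabla_\eta\Phi(x',\eta)=x$, $\nabla_x\Phi(x',\eta)=\eta'$ — this is $(x',\eta)=$(the coordinates of $\chi^{-1}$ applied to $(x,\eta')$ in the appropriate order). Then $\nabla_x\Phi(x',\eta)-\eta' = \nabla_x\Phi(x',\eta)-\nabla_x\Phi(\bar x',\bar\eta)$ and $\nabla_\eta\Phi(x',\eta)-x=\nabla_\eta\Phi(x',\eta)-\nabla_\eta\Phi(\bar x',\bar\eta)$ where $(\bar x',\bar\eta)$ is the matched solution; the fundamental theorem of calculus along the segment joining $(x',\eta)$ to $(\bar x',\bar\eta)$, together with the bound \eqref{phasedecay} on the Hessian, gives the upper bound LHS $\lesssim |(x',\eta)-(\bar x',\bar\eta)|\asymp|\chi(x',\eta)-(x,\eta')|$ — the last $\asymp$ being exactly the bi-Lipschitz property of $\chi$, which is the content of the footnote in the proof of Theorem \ref{sparsity}. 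For the reverse inequality one uses the uniform invertibility \eqref{detcond} of the mixed Hessian to bound $|(x',\eta)-(\bar x',\bar\eta)|$ from above by a constant times LHS, again by an integral-of-the-derivative argument applied to the inverse map. The main obstacle — really the only delicate point — is bookkeeping: making sure the roles of the primed/unprimed and $x$/$\eta$ variables in \eqref{cantra}, \eqref{3mezzo} are consistently matched, and checking that the global inverse function theorem genuinely applies here (it does, precisely because \eqref{detcond} holds \emph{uniformly}, not just pointwise). Since \cite[Lemma 4.2]{cnr1} establishes exactly this, I would simply cite that reference and record the short argument above for completeness. \endproof
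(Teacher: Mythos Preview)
The paper does not prove this lemma at all; it simply records the statement and cites \cite[Lemma~4.2]{cnr1}. So there is nothing to compare your argument against in the paper itself, and your decision to supply a short self-contained proof for completeness is reasonable.

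Your overall strategy---reduce \eqref{3mezzo} to a bi-Lipschitz estimate via the mean value theorem and the Hessian bounds \eqref{phasedecay}, \eqref{detcond}---is the right one. However, the ``more elementary'' version you sketch has a genuine gap. You introduce $(\bar x',\bar\eta)$ as the solution of the \emph{full} system $\nabla_\eta\Phi(\bar x',\bar\eta)=x$, $\nabla_x\Phi(\bar x',\bar\eta)=\eta'$, and then want $|(x',\eta)-(\bar x',\bar\eta)|$ to be comparable to both sides. This requires the map $G(x',\eta)=\big(\nabla_\eta\Phi(x',\eta),\nabla_x\Phi(x',\eta)\big)$ to be a global bi-Lipschitz diffeomorphism. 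But \eqref{detcond} only controls the \emph{mixed} block $\Phi_{x\eta}$ of the Hessian, not the full Jacobian of $G$; in general $G$ need not even be injective. For instance, in dimension $d=1$ the phase $\Phi(x,\eta)=x\eta+\tfrac{1}{2}x^2+\tfrac{1}{2}\eta^2$ satisfies \eqref{phasedecay} and \eqref{detcond}, yet $G(x',\eta)=(x'+\eta,\,x'+\eta)$ depends only on $x'+\eta$. So the system defining $(\bar x',\bar\eta)$ may have no solution or infinitely many, and the claimed equivalence $|(x',\eta)-(\bar x',\bar\eta)|\asymp|\chi(x',\eta)-(x,\eta')|$ is not available.

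The fix is to pivot in only one variable. Set $\bar x:=\chi_1(x,\eta)$, so that by \eqref{cantra} one has $\nabla_\eta\Phi(\bar x,\eta)=x$ and $\nabla_x\Phi(\bar x,\eta)=\chi_2(x,\eta)$. Then the second term on the left of \eqref{3mezzo} becomes $|\nabla_\eta\Phi(x',\eta)-\nabla_\eta\Phi(\bar x,\eta)|$, and since for each fixed $\eta$ the map $x'\mapsto\nabla_\eta\Phi(x',\eta)$ has Jacobian $\Phi_{\eta x}$, which is uniformly bounded by \eqref{phasedecay} and uniformly invertible by \eqref{detcond}, this map \emph{is} bi-Lipschitz (here the global inverse function theorem does apply), giving $|\nabla_\eta\Phi(x',\eta)-\nabla_\eta\Phi(\bar x,\eta)|\asymp|x'-\bar x|$. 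For the remaining terms, $|\nabla_x\Phi(x',\eta)-\nabla_x\Phi(\bar x,\eta)|\lesssim|x'-\bar x|$ by \eqref{phasedecay} alone, and a triangle inequality finishes the comparison with the right-hand side $|x'-\bar x|+|\chi_2(x,\eta)-\eta'|$. This is presumably what the cited reference does; once you restrict the pivot to the $x'$-slot your argument goes through cleanly.
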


We will now show that under the assumption \eqref{B3} we can represent the propagator $S(t,0)$ as a Fourier integral operator (FIO) in the so-called type I form
\begin{equation}\label{uno}
T_{\Phi_t,\sigma_t}f(x)=(2\pi)^{-d}\int_{\rd} e^{i \Phi_t(x,\eta)}\sigma_t(x,\eta)\widehat{f}(\eta)\,d\eta,
\end{equation}
for a suitable symbol $\sigma_t$  satisfying
\begin{equation}\label{quattro}
|\partial^\alpha \sigma_t(z)|\lesssim C^{|\alpha|}\alpha!,\  \alpha\in\mathbb{N}^{2d},\ z\in\rdd,\ t\in [0,T].
\end{equation}
To present the result in full generality, we recall the following Kernel Theorem for ultra-distributions \cite{kernelGS}.
 \begin{theorem}\label{kernelT} There exists an isomorphism between the space of linear continuous maps $T$ from $S^1_1(\rd)$ to $(S^1_1)'(\rd)$ and $(S^1_1)'(\rdd)$, which associates to every $T$ a kernel $K_T\in (S^1_1)'(\rdd)$ such that $$\la Tu,v\ra=\la K_T, v\otimes \bar{u}\ra,\quad \forall u,v \in S^1_1(\rd).$$
  \end{theorem}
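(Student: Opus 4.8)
The plan is to deduce Theorem~\ref{kernelT} from the abstract Schwartz kernel theorem for nuclear spaces, once the relevant functional-analytic properties of $S^1_1(\rd)$ have been established. Recall that for a nuclear space $E$ and a locally convex space $F$ one has a canonical identification $\mathcal{L}(E,F')\cong (E\,\widehat{\otimes}\,F)'$ (continuous linear maps versus the dual of the completed tensor product; since $E$ is nuclear the projective and injective tensor topologies coincide). Applying this with $E=F=S^1_1(\rd)$ reduces the theorem to two facts: that $S^1_1(\rd)$ is nuclear, and that $S^1_1(\rd)\,\widehat{\otimes}\,S^1_1(\rd)\cong S^1_1(\rdd)$ as topological vector spaces.

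First I would verify that $S^1_1(\rd)$, with the inductive limit topology of~\eqref{gs}, is a nuclear, complete, barrelled (LB)-space. This is classical for Gelfand--Shilov spaces: $S^1_1(\rd)=\varinjlim_n E_n$ where $E_n$ is the Banach space defined by the weighted $L^2$-estimates $\sum_{\alpha,\beta}\frac{n^{-|\alpha|-|\beta|}}{\alpha!\beta!}\|x^\alpha\partial^\beta f\|_{L^2}<\infty$, and each linking map $E_n\hookrightarrow E_{n+1}$ is nuclear (indeed Hilbert--Schmidt), so the inductive limit is nuclear, while completeness and barrelledness are automatic for such (LB)-spaces. Next I would establish the tensor identity. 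The inclusion $S^1_1(\rd)\otimes S^1_1(\rd)\hookrightarrow S^1_1(\rdd)$ is clearly continuous; the content is the converse, namely that every $F\in S^1_1(\rdd)$ is the limit, in the topology of $S^1_1(\rdd)$, of finite sums of elementary tensors. The cleanest route uses the STFT/Gabor characterization of Proposition~\ref{prop}: pick a Gabor frame on $\rdd$ generated by a product window $g\otimes g$ with $g\in S^1_1(\rd)$ and dual window $\gamma\otimes\gamma$, $\gamma\in S^1_1(\rd)$, write $F=\sum_{\lambda\in\Lambda}\langle F,\pi(\lambda)(\gamma\otimes\gamma)\rangle\,\pi(\lambda)(g\otimes g)$ with exponentially decaying coefficients, and observe that each $\pi(\lambda)(g\otimes g)$ is an elementary tensor and the partial sums converge in $S^1_1(\rdd)$. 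With these two facts the abstract theorem yields an isomorphism $\mathcal{L}(S^1_1(\rd),(S^1_1)'(\rd))\cong (S^1_1)'(\rdd)$; unwinding the identifications on elementary tensors $v\otimes\bar u$ (the conjugation reflecting the sesquilinear $L^2$ convention) produces exactly the pairing $\langle Tu,v\rangle=\langle K_T,v\otimes\bar u\rangle$, and injectivity and surjectivity of $T\mapsto K_T$ are inherited from the tensor-product isomorphism.

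An alternative, purely time-frequency proof bypasses nuclearity: using the same Gabor expansions, a continuous $T:S^1_1(\rd)\to(S^1_1)'(\rd)$ is encoded by its Gabor matrix $\big(\langle T\pi(\lambda)g,\pi(\mu)g\rangle\big)_{\mu,\lambda}$, which by continuity of $T$ and Proposition~\ref{prop} grows at most like $e^{\epsilon(|\lambda|+|\mu|)}$ for every $\epsilon>0$; such an array is precisely the Gabor-coefficient array of a unique $K_T\in(S^1_1)'(\rdd)$, and conversely, with the correspondence continuous in both directions. I expect the main obstacle to be the second functional-analytic step — proving $S^1_1(\rd)\,\widehat{\otimes}\,S^1_1(\rd)\cong S^1_1(\rdd)$ with full control of the topologies (equivalently, the permanence of the Gabor-coefficient description under tensoring), which requires the kernel-theorem machinery in the category of nuclear (LB)-spaces rather than Fréchet spaces — together with bookkeeping to match the precise conjugate-linear form of the pairing; the remainder is a routine transcription of the classical Schwartz kernel argument to the Gelfand--Shilov setting, as carried out in~\cite{kernelGS}.
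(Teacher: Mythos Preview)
The paper does not give a proof of Theorem~\ref{kernelT}; it is merely \emph{recalled} from the literature, with a citation to \cite{kernelGS} (Lozanov-Crvenkovi\'c and Peri\v{s}i\'c). So there is no ``paper's own proof'' to compare against.

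Your outline is the standard route and is essentially what is carried out in \cite{kernelGS}: show $S^1_1(\rd)$ is a nuclear (DFS)/(LB)-space, prove the tensor identity $S^1_1(\rd)\,\widehat\otimes\,S^1_1(\rd)\cong S^1_1(\rdd)$, and then invoke the abstract kernel theorem $\mathcal L(E,F')\cong (E\,\widehat\otimes\,F)'$ for nuclear $E$. Your alternative Gabor-matrix argument is also viable in this setting and is in the spirit of the characterizations used elsewhere in the paper (Proposition~\ref{prop}). One small caution: the sentence ``completeness and barrelledness are automatic for such (LB)-spaces'' is too quick as stated---completeness of an (LB)-space is not automatic in general and needs the inductive system to be, e.g., compact/regular (which it is here, since the inclusions $E_n\hookrightarrow E_{n+1}$ are even nuclear); you should say this explicitly rather than claim it is free. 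Apart from that, the sketch is sound and matches the cited reference.
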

 We also need the following characterization of the estimates in \eqref{quattro} \cite[Theorem 3.1]{cnr2}.

 \begin{proposition}\label{proad}
Let $g\in S^1_1(\rd)\setminus\{0\}$. For $f\in(S^1_1)'(\rd)$ the following conditions are equivalent:\par\medskip
(a) There exists a constant $C>0$ such that
\begin{equation}\label{smoothf}
|\partial^\a f(x)|\leq C^{|\a|+1}\a!,\quad x\in\rd,\,\a\in\bN^d.
\end{equation}
\noindent
(b)  There exist constants $C,\epsilon>0$ such that
\begin{equation}\label{STFTeps}
|V_g f(x,\xi)|\leq C \exp\big({-\eps|\xi|}\big),\quad x,\xi\in\rd,\,\a\in\bN^d.
\end{equation}
\end{proposition}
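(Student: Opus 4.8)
The statement to prove is Proposition~\ref{proad}: the equivalence, for $f\in(S^1_1)'(\rd)$ and a fixed window $g\in S^1_1(\rd)\setminus\{0\}$, between the analytic-type derivative bound \eqref{smoothf} on $f$ and the exponential decay \eqref{STFTeps} of $V_gf$ in the frequency variable only.

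The plan is to exploit the tensorization of the STFT together with Proposition~\ref{prop}. The key observation is that the hypothesis \eqref{smoothf}---that $f$ has uniformly analytic estimates on derivatives but with no decay imposed on $f$ itself---is exactly the statement that, via the Fourier transform, $\widehat f$ is (a suitable regularization of) a function analytic and exponentially decaying, i.e.\ lives in a space dual to $S^1_1$ in the spatial variable but in $S^1_1$ in the frequency variable. More concretely: I would first record the covariance identity
\[
V_gf(x,\xi)=e^{-ix\xi}\,\overline{V_{\widehat g}\widehat f(\xi,-x)},
\]
which converts exponential decay of $V_gf$ in $\xi$ into exponential decay of $V_{\widehat g}\widehat f$ in its \emph{first} (spatial) variable. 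Then condition (b) becomes: $V_{\widehat g}\widehat f(\eta,y)$ decays exponentially in $\eta$, uniformly in $y$.

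Next I would invoke the characterization (a)$\Leftrightarrow$(c) of Proposition~\ref{prop} in a ``partial'' or ``one-variable'' form: a distribution $h$ satisfies $|\partial^\alpha h|\le C^{|\alpha|+1}\alpha!$ on all of $\rd$ if and only if $|h(x)|$ has no growth restriction but $|\widehat h(\xi)|\le Ce^{-\epsilon|\xi|}$, i.e.\ $\widehat h$ decays exponentially; and, transported to the STFT side, if and only if $V_{\varphi}h(x,\xi)$ decays exponentially in $\xi$. Applying this with $h=\widehat f$ and using the identity above, condition (a) for $f$ (equivalently: $\widehat f$ exponentially decaying) will correspond precisely to exponential decay of $V_gf$ in $\xi$. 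The cleanest route is: (a) $\Rightarrow$ $\widehat f\in L^\infty$ with $\widehat f$ analytic-exponentially-localized $\Rightarrow$ (writing $\widehat f$ against $\pi(z)\widehat g$) the integral defining $V_{\widehat g}\widehat f$ converges and, by integration by parts in the defining integral exploiting the analytic bounds on the derivatives, yields $|V_{\widehat g}\widehat f(\eta,y)|\lesssim e^{-\epsilon|\eta|}$ uniformly in $y$; conversely the inversion formula \eqref{invformula} reconstructs $\widehat f$ from $V_{\widehat g}\widehat f$, and the exponential decay in the first slot together with the $S^1_1$-decay of $\widehat g$ in its argument lets one differentiate under the integral sign and bound $\partial^\alpha\widehat f$, hence (via the Paley--Wiener/analytic estimates built into $S^1_1$ membership, Proposition~\ref{prop}(b)) deduce \eqref{smoothf} for $f$.

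The main obstacle, and the place where care is needed, is the \emph{uniformity}: one must check that the constants $C,\epsilon$ in \eqref{STFTeps} are genuinely independent of the spatial variable $x$, and symmetrically that the constant in \eqref{smoothf} is uniform in $x$. This is where the specific structure of $S^1_1$ (the symmetric control of $x^\alpha\partial^\beta$) and the fact that $g\in S^1_1$ (so $V_g$ itself has a kernel with $S^1_1$-type decay in the window variable, by Proposition~\ref{prop}) are used: the convolution-type estimates that propagate the decay, as in Proposition~\ref{prop}, are uniform because the window contributes an absolutely convergent exponential weight. A secondary technical point is that $f$ is only an ultradistribution, so all pairings $\langle f,\pi(z)g\rangle$ and the inversion formula \eqref{invformula} must be read in the $(S^1_1)'$--$S^1_1$ duality; this is legitimate since $S^1_1$ is stable under $\pi(z)$, Fourier transform, differentiation and multiplication by polynomials, and the inversion formula extends to this duality by continuity. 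Once uniformity is secured, both implications reduce to routine estimates already encapsulated in Proposition~\ref{prop} and the covariance of the STFT, so I would present the argument as a short reduction to that proposition rather than redoing the integration-by-parts estimates from scratch.
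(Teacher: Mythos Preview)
First a framing remark: the paper does not actually prove Proposition~\ref{proad}; it is quoted from \cite[Theorem 3.1]{cnr2}. So there is no in-paper argument to compare against, and I comment on your proposal on its own merits.

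Your reduction has two genuine gaps. The ``one-variable'' form of Proposition~\ref{prop} you wish to invoke---that $|\partial^\alpha h|\le C^{|\alpha|+1}\alpha!$ is equivalent to exponential decay of $V_\varphi h$ in the frequency slot only---is not a consequence of Proposition~\ref{prop}; it is precisely the statement of Proposition~\ref{proad} that you are trying to prove, so the appeal is circular. Proposition~\ref{prop} characterizes $S^1_1$, which requires simultaneous control of $x^\alpha\partial^\beta f$; it does not split into independent ``space-side'' and ``frequency-side'' halves one can cite separately. Second, the step ``(a) $\Rightarrow$ $\widehat f\in L^\infty$ with $\widehat f$ exponentially localized'' is false: take $f\equiv 1$, which satisfies \eqref{smoothf} with $C=1$, yet $\widehat f=(2\pi)^d\delta$. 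For $f$ obeying only (a), $\widehat f$ is in general merely an ultradistribution, and the ``integral defining $V_{\widehat g}\widehat f$'' cannot be integrated by parts as you describe. The covariance identity is of course valid at the level of dualities, but it only trades decay of $V_gf$ in $\xi$ for decay of $V_{\widehat g}\widehat f$ in its \emph{spatial} slot---so you would then need yet another one-sided characterization, again not supplied by Proposition~\ref{prop}.

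The tools you mention at the end (integration by parts, the inversion formula) are exactly the right ones, but they should be applied directly to $V_gf$ and to $f$, without the detour through $\widehat f$. For (a)$\Rightarrow$(b): write $(i\xi)^\alpha V_gf(x,\xi)=\int e^{-iy\xi}\,\partial_y^\alpha\bigl[f(y)\overline{g(y-x)}\bigr]\,dy$, expand by Leibniz, and combine \eqref{smoothf} with the $S^1_1$ estimates on $g$ (so that $\partial^\gamma g$ is still exponentially decaying) to obtain $|\xi^\alpha V_gf(x,\xi)|\le \tilde C^{|\alpha|+1}\alpha!$ uniformly in $x$; optimizing over $\alpha$ gives \eqref{STFTeps}. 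For (b)$\Rightarrow$(a): differentiate under the integral in the inversion formula \eqref{invformula} for $f$ itself, expand $\partial_y^\alpha[e^{iy\xi}g(y-x)]$ by Leibniz, and use $\int e^{-\epsilon|\xi|}|\xi|^{k}\,d\xi\lesssim C^{k}k!$ together with the $S^1_1$ bounds on $\partial^{\alpha-\beta}g$. This direct route avoids all the distributional subtleties about $\widehat f$ and is presumably the argument in \cite{cnr2}.
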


We also introduce the Gelfand-Shilov space
\begin{align}\label{sunmezzo}
S^{1/2}_{1/2}(\rd)&=\{f\in\cS(\rd):\\
&\qquad |x^\alpha\partial^\beta f(x)|\leq C^{|\alpha|+|\beta|+1}(\alpha!\beta!)^{1/2}\  \forall\alpha,\beta\in\bN^d,\ \textrm{for some}\ C>0\},\nonumber
\end{align}
 which will be the right space for window functions.\par
 We have the following characterization.\par
 \begin{theorem}\label{caraI}
Fix  $g\in S^{1/2}_{1/2}(\rd)\setminus\{0\}$. Let $T$ be a continuous linear operator $S^1_1(\rd)\to (S^1_1)'(\rd)$ and $\chi:\rdd\to\rdd$  be a smooth symplectic transformation in $\rdd$ satisfying \eqref{B3} (with $(x,\xi)=\chi(y,\eta)$). Moreover assume, for some constant $C>0$,
\begin{equation}\label{chistima}
|\partial_z^\alpha \chi(z)|\leq C^{|\alpha|+1}\alpha!,\quad |\alpha|\geq 1.
\end{equation}
Let $\Phi\in\cC^\infty(\rdd)$ be the corresponding phase function, therefore enjoying \eqref{phasedecay} and \eqref{detcond}.\par Then the following properties are
equivalent. \par\medskip {\it (a)} $T=T_{\Phi,\sigma}$ is a FIO of type 
I for a symbol $\sigma\in\cC^\infty(\rdd)$ satisfying, for some constant $C>0$,
\begin{equation}\label{simbanalitico}
|\partial^\alpha \sigma(z)|\leq C^{|\alpha|+1} \alpha !,\quad z\in\rdd,\ \a\in\bN^{2d}.
\end{equation}
\indent {\it
(b)} There exist constants $C,\epsilon>0$ such that
\begin{equation}\label{sparsityT}
|\langle T\pi(z)g,\pi(w)g\rangle|\leq C\exp\big(-\epsilon|w-\chi(z)|\big),\quad w,z\in\rdd.
\end{equation}
\end{theorem}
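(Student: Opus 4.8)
The plan is to prove the two implications separately, exploiting the Gabor-matrix ("wave packet") reformulation of both sides.

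\emph{The implication (a) $\Rightarrow$ (b).} Suppose $T=T_{\Phi,\sigma}$ with $\sigma$ satisfying \eqref{simbanalitico}. The matrix entry $\langle T\pi(z)g,\pi(w)g\rangle$ is, after unwinding definitions, an oscillatory integral over $\eta\in\rd$ with phase $\Phi(x',\eta)$ twisted by the two Gabor windows, where $z=(y,\eta_0)$, $w=(x',\xi')$. The idea is to write this entry as the STFT, in the variables dual to the output, of a function involving $\sigma$ and $\Phi$, and then exploit that both $\sigma$ and $\Phi$ are analytic with the Gevrey-$1$ bounds \eqref{simbanalitico}, \eqref{phasedecay}. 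Concretely, I would use a non-stationary phase / integration-by-parts argument à la Sj\"ostrand: because $g\in S^{1/2}_{1/2}(\rd)$ its Fourier transform decays like $\exp(-c|\eta|^2)$, so the amplitude in the $\eta$-integral is Gaussian-localized near $\eta=\xi'$ (up to the $\chi$-twist), and on that region the exponent can be expanded and the stationary point is governed by \eqref{cantra}. Lemma \ref{tre1} then converts the resulting phase gradient estimate $|\nabla_x\Phi(x',\eta)-\eta'|+|\nabla_\eta\Phi(x',\eta)-x|$ into the distance $|\chi(z)-w|$; repeated integration by parts, carefully counting the factorial growth of the derivatives of $\Phi$ and $\sigma$ so that the geometric series in the number of integrations by parts converges for $|w-\chi(z)|$ large, yields the exponential bound \eqref{sparsityT}. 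This is essentially the analytic analogue of the almost-diagonalization arguments of \cite{cgnr,cnr1}, and indeed a very close statement is proved in \cite{cnribero}; I would cite that and indicate the adaptation rather than repeat the computation.

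\emph{The implication (b) $\Rightarrow$ (a).} Here one first produces the symbol and then checks its estimates. Starting from \eqref{sparsityT}, write $T$ in type I form as a \emph{candidate}: from the kernel $K_T\in (S^1_1)'(\rdd)$ furnished by Theorem \ref{kernelT}, one defines $\sigma$ formally by the usual formula inverting \eqref{uno}, i.e. $\sigma_t(x,\eta)$ is obtained from $K_T$ by a partial Fourier transform and the substitution dictated by $\Phi$; the nondegeneracy \eqref{detcond} guarantees that this is a legitimate change of variables, so $\sigma\in(S^1_1)'(\rdd)$ a priori. The real content is that $\sigma$ in fact satisfies \eqref{simbanalitico}. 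For this I would use Proposition \ref{proad}: it suffices to show that, for a fixed window $\gamma\in S^1_1(\rdd)$, the STFT $V_\gamma\sigma(\zeta,\theta)$ decays exponentially in $\theta$. Using that $\sigma$ is built from the Gabor matrix of $T$ through the phase $\Phi$, one expresses $V_\gamma\sigma$ as an integral against the entries $\langle T\pi(z)g,\pi(w)g\rangle$ — this is where the hypothesis $g\in S^{1/2}_{1/2}$ rather than merely $S^1_1$ enters: the "change of window" from $g$ to the analytic-phase wave packet $e^{i\Phi}$-modulated Gaussian costs a factor that is controlled precisely when $g$ has the stronger Gevrey-$1/2$ decay, since then $V_g g$ decays like $\exp(-c|z|^2)$ and absorbs the at-most-linear-exponential losses. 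Combining this Gaussian decay with \eqref{sparsityT}, the convolution-type integral giving $V_\gamma\sigma(\zeta,\theta)$ is $\lesssim \exp(-\epsilon'|\theta|)$, uniformly, and Proposition \ref{proad} then yields \eqref{simbanalitico}. One last point to verify is that the operator reconstructed from this $\sigma$ via \eqref{uno} coincides with $T$ — but this is immediate from the inversion formula \eqref{invformula} and the fact that both operators have the same Gabor matrix against $\pi(z)g$.

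\emph{Main obstacle.} The delicate step is the direction (b) $\Rightarrow$ (a), specifically controlling the factorial ($\alpha!$, Gevrey-$1$) growth of the derivatives of $\sigma$ while changing the window and performing the $\Phi$-twisted change of variables: in the analytic category there is no symbolic calculus available (as the authors emphasize), so one cannot argue by composing with parametrices; everything must be read off directly from the exponential decay of Gabor matrix entries, and the bookkeeping of constants — ensuring that the many geometric series arising from the non-stationary phase estimates and from the window change all converge with a \emph{uniform} $\epsilon$ — is where the $S^{1/2}_{1/2}$ hypothesis is genuinely used and where the proof requires care. The (a) $\Rightarrow$ (b) direction is comparatively routine given \cite{cnribero} and Lemma \ref{tre1}.
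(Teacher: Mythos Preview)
Your proposal is correct and follows essentially the same route as the paper: $(a)\Rightarrow(b)$ is quoted from \cite{cnribero}, and for $(b)\Rightarrow(a)$ the symbol $\sigma\in(S^1_1)'(\rdd)$ is produced via the Kernel Theorem and its analytic bounds are checked through Proposition~\ref{proad} by estimating the STFT of $\sigma$ via the Gabor matrix of $T$. The paper's implementation is slightly more concrete than your sketch: it uses the exact identity $|\langle T\pi(x,\eta)g,\pi(x',\eta')g\rangle|=|V_{\Psi_{(x',\eta)}}\sigma(\cdot)|$ with the phase-twisted window $\Psi_u=e^{i\Phi_{2,u}}\,\overline{g}\otimes\hat g$ (where $\Phi_{2,u}$ is the second-order Taylor remainder of $\Phi$), and then a convolution identity to pass from $V_{\Psi_u}\sigma$ to $V_{G^2}\sigma$ with $G=\overline{g}\otimes\hat g$; the $S^{1/2}_{1/2}$ hypothesis enters precisely to ensure that $\{e^{i\Phi_{2,u}}\overline{G}:u\in\rdd\}$ is bounded in $S^1_1(\rdd)$ (the quadratic growth of derivatives of $e^{i\Phi_{2,u}}$ is absorbed by the Gevrey-$1/2$ bounds on $G$), so that its Fourier transform decays exponentially uniformly in $u$ --- this is the precise version of the ``change of window'' cost you describe.
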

\begin{proof} The implication $(a) \Rightarrow (b)$ is proved in \cite[Theorem 3.3]{cnribero}.
Hence, we are left to prove the vice-versa. By the Kernel Theorem for ultra-distributions (Theorem \ref{kernelT}) $T$ can be written as a FIO of type I, with the phase $\Phi$ uniquely (up to a constant) constructed from the canonical transformation $\chi$ (hence enjoying the conditions \eqref{phasedecay} and \eqref{detcond} above) and  for some symbol $\sigma$ in $(S^1_1)'(\rdd)$. 
We have to verify that  the symbol $\sigma$ satisfies \eqref{simbanalitico}.\par
We use some techniques from \cite{CNG}.
To set up notation,  let  $\Phi _{2,z}$ be  the remainder in the second
order Taylor expansion  of the phase
$\Phi $ at $z\in\rdd$, i.e.,
\begin{equation}\label{phi2}
\Phi_{2,z}(w)=2\sum_{|\alpha|=2}\int_0^1
(1-t)\partial^\alpha\Phi(z+tw)dt \frac{w^\alpha}{\alpha!} \, \qquad
z,w\in\rdd \, ,
\end{equation}
and set
\begin{equation}\label{psi}
\Psi_z(w)=e^{ i \Phi_{2,z}(w)}
\overline{g}\otimes\widehat{g}(w).
\end{equation}
We recall the fundamental relation between the Gabor matrix of a FIO
and the STFT (see \eqref{stft} for its definition) of its symbol  from \cite[Proposition 3.2]{cn} and
\cite[Section 6]{cnr1}: for $g\in \cS^{1/2}_{1/2} (\rd )$ we have 
\[
|\langle T \pi(x,\eta)
g,\pi(x',\eta')g\rangle|=|V_{\Psi_{(x',\eta)}}\sigma
((x',\eta),(\eta'-\nabla_x\Phi(x',\eta),x-\nabla_\eta
\Phi(x',\eta)))| \, .
\]
Now, using \eqref{3mezzo},  and writing $u=(x',\eta)$, $v=(\eta',x)$, \eqref{sparsityT}  translates into
\[
|V_{\Psi_u}\sigma(u,v-\nabla\Phi(u))|\leq C'\exp(-\epsilon' |v-\nabla \Phi(u)|),
\]
and then  into the estimate
\begin{equation}\label{due}
\sup_{(u,w)\in\R^{2d}\times \R^{2d}}\exp(\epsilon' |w|) |V_{\Psi_u}\sigma(u,w)|<\infty.
\end{equation}

Now, setting $G=\overline{g}\otimes \widehat{g}\in S^{1/2}_{1/2}(\rdd)$, we shall prove that there exist $C,k>0$ such that 
\begin{equation}\label{eqvg2}
|V_{G^2}\sigma(u,v)|\leq C\exp( -k |v|),\quad u,v\in\rdd.
\end{equation}
This is equivalent to saying that $\sigma$ satisfies \eqref{simbanalitico}  by Proposition \ref{proad}.\par
We can write
\begin{align}\label{tre}
V_{G^2}\sigma(u,v)&=\int e^{- i t v}\sigma(t)\overline{G^2(t-u)}\,dt\nonumber\\
&= \int e^{- i t v}\sigma(t) e^{- i \Phi_{2,u}(t-u)}\overline{G(t-u)} e^{ i \Phi_{2,u}(t-u)}\overline{G(t-u)}\, dt\nonumber\\
&= \int e^{- i t v}\sigma(t) \overline{\Psi_u(t-u)} e^{ i \Phi_{2,u}(t-u)}\overline{G(t-u)}\,dt\nonumber\\
&=\Fur (\sigma T_u\overline{\Psi_u})\ast_v \Fur(T_u(e^{ i
\Phi_{2,u}}\overline{G}))(v)\nonumber\\
&=V_{\Psi_u}\sigma(u,\cdot)\ast \Fur (T_u (e^{ i
\Phi_{2,u}}\overline{G}))(v).
\end{align}
So that 
\begin{equation}\label{tre0}
|V_{G^2}\sigma(u,v)|\lesssim |V_{\Psi_u}\sigma(u,\cdot)|\ast| \Fur (e^{ i
\Phi_{2,u}}\overline{G})(v)|.
\end{equation}
On the other hand, by the Fa\`a di Bruno formula and \eqref{phasedecay} we have the estimates (cf.\ \cite[Lemma 3.1]{cnribero} for detailed computations)
\[
|\partial^{\beta}e^{i \Phi_{2,(v_1,u_2)}(z)}|\leq C^{|\beta|+1} \sum_{j=1}^{|\beta|}\frac{\beta_1!}{j!}\langle z\rangle^{2j},\quad |\beta|\geq1,
\]
which together with the Leibniz' formula and the definition \eqref{sunmezzo} implies that the set $\{e^{ i
 \Phi_{2,u}}\overline{G} : u\in \rdd \}$ is bounded in $S^1_1(\rdd)$. Since the  Fourier transform is continuous on $S^1_1(\rd)$ (cf.\ \cite[Theorem 6.1.6]{nr}), this implies that the set 
 $\{\Fur( e^{ i
  \Phi_{2,u}}\overline{G} : u\in \rdd \})$ is bounded in $S^1_1(\rd)$ too. We can then use the characterization of $S^1_1(\rdd)$ in terms of exponential decay (Proposition 5.1), and we get 
    \begin{equation}\label{tree}
  \sup_{u\in\rdd}|\Fur (e^{ i
  \Phi_{2,u}}\overline{G})(w)|\lesssim \exp(-h|w|)
  \end{equation}
  for some $h>0$.\par
 Taking $k:=\min\{h, \epsilon'\}/2$ and using Young's inequality in \eqref{tre0}, and then \eqref{due} and \eqref{tree} we have 
 \begin{align*}&\sup_{u,v\in\rdd} |V_{G^2}\sigma(u,v)|\exp(k|v|)\\
 &\lesssim 
 \sup_{u,v\in\rdd}\exp(k|v|)|V_{\Psi_u}\sigma(u,v)| \sup_{u}\intrd \exp(k|v|) |\Fur( e^{ i
   \Phi_{2,u}}\overline{G})(v)|\,dv\\
   &\lesssim  \sup_{u,v\in\rdd}\exp(\epsilon'|v|)|V_{\Psi_u}\sigma(u,v)|
   \\
  &\qquad\qquad\qquad\qquad \times \intrd \exp(-h|v|/2) dv \sup_{u,v} \exp(h|v|)|\Fur( e^{ i
       \Phi_{2,u}}\overline{G})(v)|<\infty.
       \end{align*}
Hence \eqref{eqvg2} is satisfied from some $C,k>0$, which concludes the proof. \end{proof}
\section{Propagation of analytic singularities}
We want now to localize in $\rdd$ the analytic singularities of a distribution and study the action on them of $S(t,0)$.\par
For $\Gamma\subset\rdd$ we define the $\delta$-neighborhood $\Gamma_\delta$, $0<\delta<1$, as 
\begin{equation}\label{7.1}
\Gamma_\delta=\{z\in\rdd:\ |z-z_0|<\delta\langle z_0\rangle\ \textrm{for some}\ z_0\in\Gamma\}.
\end{equation}
For future reference, we begin to list some properties of the $\delta$-neighborhoods.
\begin{lemma}\label{lemma7.1}
Given $\delta$, we can find $\delta^\ast$, $0<\delta^\ast<\delta$, such that for every $\Gamma\subset\rdd$
\begin{equation}\label{eq7.2}
\big(\Gamma_{\delta^\ast}\big)_{\delta^\ast}\subset\Gamma_{\delta},
\end{equation}
\begin{equation}\label{eq7.3}
\big(\rdd\setminus\Gamma_{\delta}\big)_{\delta^\ast}\subset \rdd\setminus\Gamma_{\delta^\ast}.
\end{equation}
\end{lemma}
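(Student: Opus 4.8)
The statement to be proved (Lemma \ref{lemma7.1}) asserts that, given $\delta$, there is a smaller $\delta^\ast$ so that iterating the $\delta^\ast$-neighborhood construction twice stays inside the single $\delta$-neighborhood, and that enlarging the complement of a $\delta$-neighborhood by $\delta^\ast$ stays inside the complement of the $\delta^\ast$-neighborhood. Both are essentially quantitative versions of the triangle inequality adapted to the conic-type weight $\langle z_0\rangle$, and the only subtlety is controlling how $\langle\cdot\rangle$ changes along short steps.

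\medskip

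The plan is as follows. First I would record the elementary comparison estimate: if $|z - z_0| < \delta^\ast \langle z_0\rangle$ with $\delta^\ast < 1$, then $\langle z\rangle \leq \langle z_0\rangle + |z - z_0| \leq (1 + \delta^\ast)\langle z_0\rangle$ and, in the other direction, $\langle z_0\rangle \leq \langle z\rangle + |z - z_0|$ gives $\langle z_0\rangle \leq (1-\delta^\ast)^{-1}\langle z\rangle$ (using $|z-z_0| < \delta^\ast\langle z_0\rangle$). Thus, within a $\delta^\ast$-neighborhood, the two weights are comparable with constants depending only on $\delta^\ast$ and tending to $1$ as $\delta^\ast\to 0$.

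\medskip

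For \eqref{eq7.2}: take $z \in (\Gamma_{\delta^\ast})_{\delta^\ast}$, so there is $w \in \Gamma_{\delta^\ast}$ with $|z - w| < \delta^\ast\langle w\rangle$, and there is $z_0\in\Gamma$ with $|w - z_0| < \delta^\ast\langle z_0\rangle$. Then $|z - z_0| \leq |z-w| + |w - z_0| < \delta^\ast\langle w\rangle + \delta^\ast\langle z_0\rangle$, and by the comparison estimate applied to the pair $(w,z_0)$ we have $\langle w\rangle \leq (1+\delta^\ast)\langle z_0\rangle$, hence $|z - z_0| < \delta^\ast(2 + \delta^\ast)\langle z_0\rangle$. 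Choosing $\delta^\ast$ small enough that $\delta^\ast(2+\delta^\ast) < \delta$ (e.g.\ $\delta^\ast < \delta/3$) yields $z \in \Gamma_\delta$.

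\medskip

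For \eqref{eq7.3}, I would argue by contraposition. Suppose $z \in (\rdd\setminus\Gamma_\delta)_{\delta^\ast}$ but $z \in \Gamma_{\delta^\ast}$; I want a contradiction for $\delta^\ast$ small. From $z\in(\rdd\setminus\Gamma_\delta)_{\delta^\ast}$ there is $w\notin\Gamma_\delta$ with $|z-w|<\delta^\ast\langle w\rangle$; from $z\in\Gamma_{\delta^\ast}$ there is $z_0\in\Gamma$ with $|z-z_0|<\delta^\ast\langle z_0\rangle$. Combining as above, $|w-z_0| \leq |w-z| + |z-z_0| < \delta^\ast\langle w\rangle + \delta^\ast\langle z_0\rangle$; using the comparison estimate (now relating $\langle w\rangle$ to $\langle z_0\rangle$ through $\langle z\rangle$, which costs a factor bounded in terms of $\delta^\ast$) we get $|w - z_0| < \delta'\langle z_0\rangle$ with $\delta'\to 0$ as $\delta^\ast\to 0$. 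Taking $\delta^\ast$ small enough that $\delta' < \delta$ forces $w\in\Gamma_\delta$, a contradiction; hence $z\in\rdd\setminus\Gamma_{\delta^\ast}$. Finally, one picks a single $\delta^\ast$ (the smaller of the two thresholds found, and of course $\delta^\ast<\delta$) making both inclusions hold simultaneously.

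\medskip

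The only mild obstacle is bookkeeping the constants in the weight comparison so that all the emerging constants genuinely shrink to zero with $\delta^\ast$; once the two-sided estimate $\langle z\rangle \asymp \langle z_0\rangle$ on short steps is fixed with explicit dependence on $\delta^\ast$, both inclusions follow by a direct triangle-inequality chase. No deep input is needed — it is purely a lemma about the geometry of these conic neighborhoods, to be used repeatedly in the microlocal arguments of Section 7.
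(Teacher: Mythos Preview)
Your argument is correct and is precisely the kind of direct triangle-inequality computation the paper has in mind; the paper itself omits all details, declaring only that ``the proof is straightforward.'' Your explicit constants (e.g.\ $\delta^\ast(2+\delta^\ast)<\delta$ for \eqref{eq7.2} and $\delta^\ast\cdot 2/(1-\delta^\ast)<\delta$ for \eqref{eq7.3}) are exactly what is needed, so there is nothing to add or compare.
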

The proof is straightforward. Consider then the map $\chi=\chi_t:\rdd\to\rdd$, defined in the preceding Sections, for a fixed $t$. Observe that $\chi$ and $\chi^{-1}$ are Lipschitz, and hence $\langle \chi(z)\rangle \asymp \langle z\rangle$. 
\begin{lemma}\label{lemma7.2}
For every $\delta$ there exists $\delta^\ast$, $0<\delta^\ast<\delta$, such that for every $\Gamma\subset\rdd$ 
\begin{equation}\label{eq7.4}
\chi(\Gamma_{\delta^\ast})\subset\chi(\Gamma)_\delta,
\end{equation}
\begin{equation}\label{eq7.5}
\chi(\Gamma)_{\delta^\ast}\subset\chi(\Gamma_\delta).
\end{equation}
The constant $\delta^\ast$ depends on $\chi$ and $\delta$ but it is independent of $\Gamma$.
\end{lemma}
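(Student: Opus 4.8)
The plan is to exploit directly the bi-Lipschitz character of $\chi$ together with the equivalence $\langle\chi(z)\rangle\asymp\langle z\rangle$ recalled just before the statement. Fix a common Lipschitz constant $L\geq 1$ for $\chi$ and $\chi^{-1}$, and a constant $C\geq 1$ with $C^{-1}\langle z\rangle\leq\langle\chi(z)\rangle\leq C\langle z\rangle$ for all $z\in\rdd$ (these depend only on $\chi$, which will give the asserted independence of $\delta^\ast$ from $\Gamma$). I would then claim that $\delta^\ast:=\tfrac12\,\delta/(LC)$ serves for both inclusions; note $\delta^\ast<\delta<1$, so that $\Gamma_{\delta^\ast}$ is well defined in the sense of \eqref{7.1}.

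For \eqref{eq7.4}: take $w\in\chi(\Gamma_{\delta^\ast})$, say $w=\chi(z)$ with $z\in\Gamma_{\delta^\ast}$, so $|z-z_0|<\delta^\ast\langle z_0\rangle$ for some $z_0\in\Gamma$. Setting $w_0:=\chi(z_0)\in\chi(\Gamma)$, one estimates
\[
|w-w_0|=|\chi(z)-\chi(z_0)|\leq L|z-z_0|<L\delta^\ast\langle z_0\rangle\leq LC\delta^\ast\langle w_0\rangle<\delta\langle w_0\rangle,
\]
whence $w\in\chi(\Gamma)_\delta$. For \eqref{eq7.5}: take $w\in\chi(\Gamma)_{\delta^\ast}$, so $|w-w_0|<\delta^\ast\langle w_0\rangle$ with $w_0=\chi(z_0)$, $z_0\in\Gamma$. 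Here one uses that $\chi$ is a bijection: put $z:=\chi^{-1}(w)$, so $w=\chi(z)$, and it remains only to check $z\in\Gamma_\delta$. Since $\chi^{-1}$ is $L$-Lipschitz and $\langle w_0\rangle\leq C\langle z_0\rangle$,
\[
|z-z_0|=|\chi^{-1}(w)-\chi^{-1}(w_0)|\leq L|w-w_0|<L\delta^\ast\langle w_0\rangle\leq LC\delta^\ast\langle z_0\rangle<\delta\langle z_0\rangle,
\]
so $z\in\Gamma_\delta$ and therefore $w=\chi(z)\in\chi(\Gamma_\delta)$.

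The only step that needs a moment of care is the second inclusion, where one must invoke surjectivity of $\chi$ (it is a diffeomorphism) to produce the preimage $z=\chi^{-1}(w)$ and then control $|z-z_0|$ through the Lipschitz constant of $\chi^{-1}$ rather than of $\chi$; after that, selecting a single $\delta^\ast$ that works for both inclusions is merely a matter of taking the smaller of the two thresholds, as above. No genuine obstacle is expected — this is the $\chi$-pushforward analogue of the elementary Lemma \ref{lemma7.1}.
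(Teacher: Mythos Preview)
Your proof is correct and follows essentially the same approach as the paper: the argument for \eqref{eq7.4} is identical (Lipschitz bound on $\chi$ combined with $\langle z_0\rangle\lesssim\langle\chi(z_0)\rangle$), and for \eqref{eq7.5} the paper simply observes that applying $\chi^{-1}$ and writing $\Lambda=\chi(\Gamma)$ reduces the claim to \eqref{eq7.4} for $\chi^{-1}$, which is exactly what your direct computation unfolds. The only cosmetic difference is that you give an explicit $\delta^\ast=\delta/(2LC)$ while the paper just takes $\delta^\ast$ ``sufficiently small''.
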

\begin{proof}
We first prove \eqref{eq7.4}. Let $w\in\chi(\Gamma_{\delta^\ast})$. Then there exists $z_0\in \Gamma$ such that $w=\chi(z)$ for some $z$ with $|z-z_0|<\delta^\ast\langle z_0\rangle$. \par
On the other hand, taking $w_0=\chi(z_0)\in\chi(\Gamma)$, we have 
\[
|w-w_0|\leq C_1\delta^\ast \langle z_0\rangle\leq C_1C_2\delta^\ast\langle w_0\rangle.
\]
Taking $\delta^\ast$ sufficiently small, the proof of \eqref{eq7.4} is concluded. \par
As for \eqref{eq7.5}, applying $\chi^{-1}$ to both sides and writing $\Lambda=\chi(\Gamma)$, $\Gamma=\chi^{-1}(\Lambda)$, we are reduced to prove
\[
\chi^{-1}(\Lambda_{\delta^\ast})\subset \chi^{-1}(\Lambda)_\delta,
\]
so we come back to \eqref{eq7.4}.
\end{proof}

In the following we shall argue on $f\in (S^1_1)'(\rd)$, and take windows $g\in S^1_1(\rd)$. Then for every $\lambda>0$
\begin{equation}\label{eq7.6}
|V_g f(z)|\lesssim e^{\lambda \langle z\rangle},\quad z\in\rdd.
\end{equation}
This is an obvious variant of \cite[Theorem 2.4]{GZ}.\par
The readers which are more confident with Schwartz distributions may assume $f\in\cS'(\rdd)$ instead, \eqref{eq7.6} being obviously satisfied. \par
\begin{definition}\label{def7.3}
Let $f\in (S^1_1)'(\rd)$, $g\in S^1_1(\rd)\setminus\{0\}$, $\Gamma\subset\rdd$. We say that $f$ is (analytic) regular in $\Gamma$ if there exist $\epsilon>0$ and $\delta>0$ such that 
\begin{equation}\label{eq7.7}
|V_g f(z)|\lesssim e^{-\epsilon\langle z\rangle}\quad {\rm for}\ z\in\Gamma_\delta.
\end{equation}
\end{definition}
Of course, \eqref{eq7.7} gives us some nontrivial information about $f$ only when $\Gamma$ is unbounded. We shall prove later that Definition \ref{def7.3} does not depend on the choice of the window $g\in S^1_1(\rd)$. \par
\begin{theorem}\label{teo7.4}
Let $S(t,0)$ and $\chi_t$ be defined as in the previous Sections, fix $f\in (S^1_1)'(\rd)$ and $\Gamma\subset\rdd$. If $f$ is regular in $\Gamma$, then $S(t,0)f$ is regular in $\chi_t(\Gamma)$.
\end{theorem}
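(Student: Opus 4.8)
The strategy is to transfer the regularity statement, which is phrased in terms of exponential decay of the STFT, into the Gabor-matrix language where the sparsity estimate \eqref{sparsityeq} applies, and then transfer back. Fix a window $g\in S^1_1(\rd)$ with $\|g\|_{L^2}=(2\pi)^{-d/2}$ (admissible by Proposition \ref{prop} since the notion of regularity is window-independent, a fact proved later but which we may take for granted, or we can simply prove this theorem for one convenient $g$ and invoke that independence). The assumption is that $|V_g f(z)|\lesssim e^{-\epsilon\langle z\rangle}$ for $z\in\Gamma_\delta$, and we want $|V_g(S(t,0)f)(w)|\lesssim e^{-\epsilon'\langle w\rangle}$ for $w$ in some $\chi_t(\Gamma)_{\delta'}$.

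\textbf{Main step.} By the inversion formula \eqref{invformula} and the definition of the STFT,
\[
V_g(S(t,0)f)(w)=\int_{\rdd}\langle S(t,0)\pi(z)g,\pi(w)g\rangle\, V_g f(z)\,dz,
\]
and by Theorem \ref{sparsity} the kernel is bounded by $C\exp(-\epsilon_0|w-\chi_t(z)|)$. One splits the integral into the region $z\in\Gamma_\delta$, where we use the hypothesis $|V_g f(z)|\lesssim e^{-\epsilon\langle z\rangle}$, and the complementary region $z\notin\Gamma_\delta$, where we only have the a priori bound \eqref{eq7.6}, namely $|V_g f(z)|\lesssim e^{\lambda\langle z\rangle}$ for \emph{every} $\lambda>0$ — this freedom in choosing $\lambda$ small is what saves the argument. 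On the good region, since $\chi_t$ and $\chi_t^{-1}$ are bi-Lipschitz we have $\langle z\rangle\asymp\langle\chi_t(z)\rangle$, so the convolution of $e^{-\epsilon_0|w-\chi_t(z)|}$ with $e^{-\epsilon\langle z\rangle}$ produces, after the change of variable $\tilde z=\chi_t(z)$ (Jacobian $1$), a bound $\lesssim e^{-\epsilon'\langle w\rangle}$ by the standard convolution-of-exponentials estimate (cf.\ \cite[Lemma 4.3]{cnr2}). On the bad region, for $w$ ranging over a neighborhood $\chi_t(\Gamma)_{\delta^\ast}$ of $\chi_t(\Gamma)$ one uses Lemma \ref{lemma7.2} (specifically \eqref{eq7.5}, applied with appropriately nested constants, together with \eqref{eq7.3}) to conclude that $\chi_t^{-1}(w)$ stays at distance $\gtrsim\delta\langle w\rangle$ from $\Gamma$ — equivalently $\chi_t(z)$ is far from $w$ whenever $z\notin\Gamma_\delta$ — so that $|w-\chi_t(z)|\gtrsim \langle z\rangle+\langle w\rangle$ on that region; then choosing $\lambda<\epsilon_0/2$ (say) the factor $e^{\lambda\langle z\rangle}$ is absorbed and the remaining $e^{-\epsilon_0|w-\chi_t(z)|/2}$ is integrable in $z$ and decays like $e^{-\epsilon''\langle w\rangle}$.

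\textbf{Conclusion and the delicate point.} Adding the two contributions gives $|V_g(S(t,0)f)(w)|\lesssim e^{-\epsilon'''\langle w\rangle}$ for $w$ in a $\delta'$-neighborhood of $\chi_t(\Gamma)$, which by Definition \ref{def7.3} means $S(t,0)f$ is regular in $\chi_t(\Gamma)$. The main obstacle is the bookkeeping of the neighborhood constants: one must choose $\delta^\ast$ small enough (via Lemmas \ref{lemma7.1} and \ref{lemma7.2}) that the geometric separation between $\chi_t(\Gamma)_{\delta^\ast}$ and $\chi_t\big(\rdd\setminus\Gamma_\delta\big)$ is genuinely comparable to $\langle w\rangle$ (and to $\langle z\rangle$), uniformly and independently of $\Gamma$; this is exactly what \eqref{eq7.3}, \eqref{eq7.4}, \eqref{eq7.5} are designed to supply, and the bi-Lipschitz property of $\chi_t$ with constants uniform in $t\in[0,T]$ (established in the footnote to Theorem \ref{sparsity}) is what makes the distance comparison survive the change of variables. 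The convolution estimates themselves are routine once this separation is in place.
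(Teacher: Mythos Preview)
Your proposal is correct and follows essentially the same route as the paper: represent $V_g(S(t,0)f)(w)$ as an integral against the Gabor matrix, invoke the exponential sparsity bound from Theorem~\ref{sparsity}, split the $z$-integral into $\Gamma_\delta$ and its complement, and use Lemmas~\ref{lemma7.1}--\ref{lemma7.2} to guarantee the separation estimate $|w-\chi_t(z)|\gtrsim\max\{\langle z\rangle,\langle w\rangle\}$ on the bad region while exploiting the freedom in $\lambda$ from \eqref{eq7.6}. The only cosmetic difference is that on the good region the paper argues via the triangle inequality $|w|\leq|w-\chi_t(z)|+C\langle z\rangle$ directly, whereas you phrase it as a change of variables followed by a convolution-of-exponentials lemma; both are equivalent here.
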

\begin{proof}
For sufficiently small $\delta>0$ and $\epsilon>0$ we have \eqref{eq7.7} in $\Gamma_\delta$ whereas \eqref{eq7.6} is valid in $\rdd$ for every $\lambda>0$. Now, from Theorem \ref{sparsity} we have 
\begin{equation}\label{eq7.8}
V_g(S(t,0) f)(w)=\int k(t,w,z) V_gf(z)\, dz
\end{equation}
with 
\begin{equation}\label{eq7.9}
|k(t,w,z)|=|\langle S(t,0) \pi(z) g,\pi(w)g\rangle|\lesssim e^{-\epsilon'|w-\chi_t(z)|},
\end{equation}
 for some constant $\epsilon'>0$.\par
 We want to show that $S(t,0)f$ is regular in $\chi_t(\Gamma)$. To this end, using \eqref{eq7.5} in Lemma \ref{lemma7.2}, we take first $\delta^\ast<\delta$ such that $\chi_t(\Gamma)_{\delta^\ast}\subset\chi_t(\Gamma_\delta)$ and then using \eqref{eq7.2} in Lemma \ref{lemma7.1} we fix $\delta'<\delta^\ast$ such that 
 \begin{equation}\label{eq7.10}
 \big(\chi_t(\Gamma)_{\delta'}\big)_{\delta'}\subset\chi_t(\Gamma)_{\delta^\ast}\subset \chi_t(\Gamma_\delta).
 \end{equation}
 Note that for $z\not\in\Gamma_{\delta}$, i.e.\ $\chi_t(z)\not\in \chi_t(\Gamma_{\delta})$, and $w\in \chi_t(\Gamma)_{\delta'}$ we have 
 \begin{equation}\label{eq7.11}
 |w-\chi_t(z)|\gtrsim \max\{\langle z\rangle,\langle w\rangle\}
 \end{equation}
 since $\chi_t(z)\not\in\big((\chi_t(\Gamma)_{\delta'}\big)_{\delta'}$ in view of \eqref{eq7.10}, and we may use as well \eqref{eq7.3}.\par 
 We shall prove 
 \begin{equation}\label{eq7.12}
 |V_g(S(t,0)f)(w)|\lesssim e^{-\eta\langle w\rangle}\quad{\rm for}\ w\in\chi_t(\Gamma)_{
 \delta'},
 \end{equation}
 for some $\eta>0$, and with $\delta'$ determined as before. Using \eqref{eq7.8} and \eqref{eq7.9}, we may estimate
 \begin{equation}\label{eq7.13}
 |e^{\eta\langle w\rangle}V_g(S(t,0)f)(w)|\leq \int_{\rdd}I(w,z)\,dz
 \end{equation}
 with 
 \begin{equation}\label{eq7.14}
 I(w,z)= e^{\eta\langle w\rangle} e^{-\epsilon'|w-\chi_t(z)|}|V_g f(z)|.
 \end{equation}
 In view of \eqref{eq7.12}, to prove that $S(t,0)f$ is regular in $\chi_t(\Gamma)$, it will be then sufficient to show that 
 \begin{equation}\label{eq7.15}
 \int_\rdd I(w,z)\, dz\leq C<\infty \quad {\rm for}\ w\in\chi_t(\Gamma)_{\delta'},
 \end{equation}
 for $\eta$ sufficiently small in \eqref{eq7.14}. Let us split the domain of integration in \eqref{eq7.15} into two domains, $\Gamma_\delta$ and $\rdd\setminus\Gamma_\delta$. First for $z\in \rdd\setminus \Gamma_{\delta}$ and $w\in \chi_t(\Gamma)_{\delta'}$ we use \eqref{eq7.11} to estimate for some $\epsilon''>0$
 \begin{equation}\label{eq7.16}
 e^{-\epsilon'|w-\chi_t(z)|}\leq e^{-\epsilon''\langle w\rangle}e^{-\epsilon''\langle z\rangle}.
 \end{equation}
 Hence  for $w\in \chi_t(\Gamma)_{\delta'}$, by using \eqref{eq7.14}, \eqref{eq7.16} and \eqref{eq7.6}
 \[
 \int_{\rdd\setminus\Gamma_{\delta}} I(w,z)\, dz\leq \int_{\rdd} \exp[\eta\langle w\rangle -\epsilon''\langle w\rangle -\epsilon''\langle z\rangle+\lambda\langle z\rangle]\, dz,
 \]
 which is uniformly bounded if we choose $\eta<\epsilon''$ and $\lambda<\epsilon''$.\par On the other hand, by using \eqref{eq7.7} in $\Gamma_{\delta}$ and estimating 
 \[ |w|\leq |w-\chi_t(z))|+|\chi_t(z)| \leq |w-\chi_t(z)|+C\langle z\rangle,
 \]
($|\chi_t(z)|\lesssim\langle z\rangle$ because $\chi_t$ is globally Lipschitz continuous) 
we obtain
 \[
 \int_{\Gamma_{\delta}} I(w,z)\, dz\leq \int_{\rdd} \exp[\eta|w-\chi_t(z)|+\eta C\langle z\rangle -\epsilon'|w-\chi_t(z)|-\epsilon\langle z\rangle]\, dz,
\]
which is uniformly bounded if we choose $\eta<\epsilon'$, $\eta<\epsilon/C$.\par
The proof is complete. 
\end{proof}

Let us now prove that Definition \ref{def7.3} does not depend on the choice of $g\in S^1_1(\rd)$. We need the following lemma, which is an easy variant of \cite[Lemma 11.3.3]{book}. 
\begin{lemma}\label{lemma7.5}
If $f\in (S^1_1)'(\rd)$, $g,h\in S_1^1(\rd)$, $g\not\equiv0$, then 
\[
|V_h f(w)|\leq \frac{1}{\|g\|_{L^2}^2}\big(|V_g f|\ast |V_h g|  \big)(w).
\]
\end{lemma}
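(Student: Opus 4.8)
The plan is to reduce the estimate to the \stft{} inversion (reproducing) formula with $g$ as auxiliary window, combined with the elementary observation that the ``cross'' matrix coefficient $\langle \pi(z)g,\pi(w)h\rangle$ is, up to a unimodular phase, exactly $V_hg(w-z)$. So essentially no analysis is needed beyond what is already available; the only delicate point is a Fubini-type interchange in the ultradistribution setting.

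Concretely, I would proceed in the following order. First, invoke the inversion formula \eqref{invformula}, or rather its version for an arbitrary window $g\in S^1_1(\rd)$, $g\not\equiv0$, which carries the factor $\|g\|_{L^2}^{-2}$ (see \cite[Corollary 3.2.3]{book}), understood weakly in $(S^1_1)'(\rd)$:
\[
f=\frac{1}{\|g\|_{L^2}^2}\int_{\rdd} V_gf(z)\,\pi(z)g\,dz .
\]
Since $h\in S^1_1(\rd)$ and time-frequency shifts preserve $S^1_1(\rd)$, the function $\pi(w)h$ is a legitimate test element, so testing the identity against it gives
\[
V_hf(w)=\langle f,\pi(w)h\rangle=\frac{1}{\|g\|_{L^2}^2}\int_{\rdd} V_gf(z)\,\langle \pi(z)g,\pi(w)h\rangle\,dz .
\]
Next I would compute the kernel: from the commutation relations between $T_x$ and $M_\xi$ one gets $\pi(z)^\ast\pi(w)=c(z,w)\,\pi(w-z)$ with $|c(z,w)|=1$, whence $\langle \pi(z)g,\pi(w)h\rangle=\overline{c(z,w)}\,V_hg(w-z)$ and in particular $|\langle \pi(z)g,\pi(w)h\rangle|=|V_hg(w-z)|$. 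Plugging this in and applying the triangle inequality under the integral sign produces exactly $|V_hf(w)|\leq\|g\|_{L^2}^{-2}\,(|V_gf|\ast|V_hg|)(w)$.

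The one step I expect to be the real (though mild) obstacle is justifying the interchange of the duality bracket with the weak integral above, i.e.\ the passage to absolutely convergent scalar integrals. This is where the a priori bound \eqref{eq7.6}, namely $|V_gf(z)|\lesssim e^{\lambda\langle z\rangle}$ for every $\lambda>0$, must be used: since $g,h\in S^1_1(\rd)$ one has $V_hg\in S^1_1(\rdd)$, so $|V_hg(w-z)|\lesssim e^{-\epsilon|w-z|}$ for some $\epsilon>0$ by Proposition \ref{prop}, and choosing $\lambda<\epsilon$ makes the integral $\int_{\rdd}|V_gf(z)|\,|V_hg(w-z)|\,dz$ finite and locally uniform in $w$; this both legitimizes the Fubini step and shows the right-hand side is finite. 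This is precisely the argument behind \cite[Lemma 11.3.3]{book}, the present statement being the two-window variant of it, so once this bookkeeping is in place the proof is complete.
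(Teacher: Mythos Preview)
Your proposal is correct and follows exactly the standard argument behind \cite[Lemma 11.3.3]{book}; the paper itself does not give a proof but simply cites the lemma as an easy variant of that result. The only addition you supply is the justification of absolute convergence in the ultradistribution setting via \eqref{eq7.6} and Proposition~\ref{prop}, which is precisely the bookkeeping needed and is implicit in the paper's reference.
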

\begin{proposition}\label{pro7.6}
Assume that the estimate \eqref{eq7.7} in Definition \ref{def7.3} is satisfied for some $\epsilon>0$, $\delta>0$, and some choice of $g\in S^1_1(\rd)$. Then \eqref{eq7.7} is still satisfied, for some new $\epsilon>0$, $\delta>0$, if we replace $g$ with $h\in S^1_1(\rd)$. 
\end{proposition}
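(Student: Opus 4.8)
The plan is to deduce the claim from the window-change inequality of Lemma~\ref{lemma7.5} together with the exponential decay of $V_hg$, mimicking the splitting argument in the proof of Theorem~\ref{teo7.4} with $\chi_t$ replaced by the identity. Since $g,h\in S^1_1(\rd)$, Proposition~\ref{prop} (applied with window $h$ to the function $g$) gives a constant $\mu>0$ with $|V_hg(z)|\lesssim e^{-\mu|z|}$ for all $z\in\rdd$, so Lemma~\ref{lemma7.5} yields
\[
|V_hf(w)|\lesssim\int_{\rdd}|V_gf(z)|\,|V_hg(w-z)|\,dz\lesssim\int_{\rdd}|V_gf(z)|\,e^{-\mu|w-z|}\,dz,\qquad w\in\rdd.
\]
This is exactly the type of integral estimated in \eqref{eq7.13}--\eqref{eq7.15}, with the propagator kernel $k(t,w,z)$ replaced by the exponentially decaying kernel $V_hg(w-z)$ and with $\chi_t=\mathrm{Id}$.

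Concretely, suppose \eqref{eq7.7} holds for the window $g$ with parameters $\epsilon,\delta>0$. Using \eqref{eq7.2} and \eqref{eq7.3} of Lemma~\ref{lemma7.1} we fix $\delta'<\delta$ with $(\Gamma_{\delta'})_{\delta'}\subset\Gamma_\delta$; then, exactly as in \eqref{eq7.11}, for $w\in\Gamma_{\delta'}$ and $z\notin\Gamma_\delta$ one has $|w-z|\gtrsim\max\{\langle w\rangle,\langle z\rangle\}$. Now fix $w\in\Gamma_{\delta'}$ and split the integral above over $\Gamma_\delta$ and $\rdd\setminus\Gamma_\delta$. On $\Gamma_\delta$, combining \eqref{eq7.7}, the bound $|V_hg(w-z)|\lesssim e^{-\mu|w-z|}$ and the inequality $\langle w\rangle\lesssim\langle z\rangle+|w-z|$ gives, for $\eta>0$ small enough,
\[
\int_{\Gamma_\delta}|V_gf(z)|\,e^{-\mu|w-z|}\,dz\lesssim e^{-\eta\langle w\rangle}\int_{\rdd}e^{-(\epsilon-\eta)\langle z\rangle}e^{-(\mu-\eta)|w-z|}\,dz\lesssim e^{-\eta\langle w\rangle}.
\]
On $\rdd\setminus\Gamma_\delta$ we use instead the a priori bound \eqref{eq7.6} with $\lambda>0$ to be chosen, together with $|w-z|\gtrsim\max\{\langle w\rangle,\langle z\rangle\}$, which gives $e^{-\mu|w-z|}\lesssim e^{-\mu'\langle w\rangle}e^{-\mu'\langle z\rangle}$ for some $\mu'>0$, whence
\[
\int_{\rdd\setminus\Gamma_\delta}|V_gf(z)|\,e^{-\mu|w-z|}\,dz\lesssim e^{-\mu'\langle w\rangle}\int_{\rdd}e^{(\lambda-\mu')\langle z\rangle}\,dz\lesssim e^{-\mu'\langle w\rangle}
\]
as soon as $\lambda<\mu'$. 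Choosing $\epsilon':=\min\{\eta,\mu'\}$ and keeping $\delta'$ as above, we obtain \eqref{eq7.7} for the window $h$ with parameters $\epsilon',\delta'$.

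The only point requiring care is the geometric estimate $|w-z|\gtrsim\max\{\langle w\rangle,\langle z\rangle\}$ for $w\in\Gamma_{\delta'}$, $z\notin\Gamma_\delta$; this is precisely what the nesting properties \eqref{eq7.2}--\eqref{eq7.3} of Lemma~\ref{lemma7.1} supply, and it was already established within the proof of Theorem~\ref{teo7.4}. Everything else is a routine integration of exponentials, uniform in $w$, which uses $g$ and $h$ only through their membership in $S^1_1(\rd)$. Finally, exchanging the roles of $g$ and $h$ gives the converse implication, so the notion of regularity in Definition~\ref{def7.3} is indeed independent of the chosen window.
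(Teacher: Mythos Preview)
Your proof is correct and follows exactly the paper's approach: apply Lemma~\ref{lemma7.5} together with the exponential decay of $V_hg$ (from Proposition~\ref{prop}), then split the convolution integral over $\Gamma_\delta$ and its complement and argue as in Theorem~\ref{teo7.4} with $\chi_t=\mathrm{Id}$. You have simply written out the details that the paper leaves implicit.
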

\begin{proof}
Since 
\[
|V_h g(z)|\lesssim e^{-\epsilon'|z|},\quad z\in \rdd,
\]
for some constant $\epsilon'>0$, Lemma 7.5 gives
\[
|V_h f(w)|\lesssim\int e^{-\epsilon'|w-z|}|V_g f(z)|\, dz.
\]
We then split the domain of integration into $\Gamma_\delta$ and $\rdd\setminus\Gamma_\delta$, and argue as in the proof of the preceding Theorem \ref{teo7.4}, being now $\chi_t={\rm identity}$. 
\end{proof}

The following definition allows one to describe the position in phase space of the singularities of a function $f$.
\begin{definition}\label{def7.7}
Given $f\in (S^1_1)'(\rd)$, we shall call filter of the analytic singularities of $f$ the collection of subsets of $\rdd$: 
\[
\Fu(f)=\{\Lambda\subset\rdd:\ f\ \textit{is regular in}\ \Gamma=\rdd\setminus \Lambda\},
\]
cf.\ Definition \ref{def7.3}. 
\end{definition}
$\Fu(f)$ is a filter since if $\Lambda\in \Fu(f)$ and $\Lambda\subset\Lambda'$, then also $\Lambda'\in \Fu(f)$, and moreover if $\Lambda_1,\ldots,\Lambda_n\in \Fu(f)$ then also $\cap_{j=1}^n \Lambda_j\in \Fu(f)$. Note that any neighborhood of $\infty$ i.e.\ the complementary of a bounded set, belongs to $\Fu(f)$. We have $f\in S^1_1(\rd)$ if and only if $\emptyset\in \Fu(f)$, that is equivalent to saying that there exists $\Lambda_1,\ldots,\Lambda_n\in \Fu(f)$ such that $\cap_{j=1}^n \Lambda_j=\emptyset$.\par
In this language, Theorem \ref{teo7.4} can be rephrased as follows. 
\begin{theorem}\label{teo7.8}
For every $f\in (S^1_1)'(\rd)$ and every fixed $t$, $0\leq t\leq T$, we have 
\[\chi_t(\Fu(f))= \Fu(S(t,0)f).
\]
\end{theorem}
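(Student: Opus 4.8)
The plan is to deduce Theorem \ref{teo7.8} directly from Theorem \ref{teo7.4} together with the invariance of regularity under the change of window (Proposition \ref{pro7.6}), essentially by a symmetry argument that exploits the reversibility of the Schr\"odinger flow. Unwinding Definition \ref{def7.7}, the claimed set equality $\chi_t(\Fu(f))=\Fu(S(t,0)f)$ amounts to two inclusions, and since $\chi_t$ is a bijection of $\rdd$ the statement $\Lambda\in\Fu(f)$ (i.e.\ $f$ regular on $\Gamma=\rdd\setminus\Lambda$) should be matched with $\chi_t(\Lambda)\in\Fu(S(t,0)f)$ (i.e.\ $S(t,0)f$ regular on $\rdd\setminus\chi_t(\Lambda)=\chi_t(\Gamma)$, using that $\chi_t$ is bijective so $\rdd\setminus\chi_t(\Lambda)=\chi_t(\rdd\setminus\Lambda)$).

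First I would prove the inclusion $\chi_t(\Fu(f))\subseteq\Fu(S(t,0)f)$: given $\Lambda\in\Fu(f)$, set $\Gamma=\rdd\setminus\Lambda$, so $f$ is regular in $\Gamma$; by Theorem \ref{teo7.4}, $S(t,0)f$ is regular in $\chi_t(\Gamma)=\rdd\setminus\chi_t(\Lambda)$, which is precisely the statement $\chi_t(\Lambda)\in\Fu(S(t,0)f)$. For the reverse inclusion I would use the reversibility of the propagator, as recorded in the proof of Corollary \ref{gelfand2}: $S(t,0)$ is invertible on $(S^1_1)'(\rd)$ with inverse $S(t,0)^{-1}=\tilde S(T,T-t)$, the forward propagator of the equation $-D_tv+a^w(T-t,x,D)v=0$, which satisfies the same hypotheses {\bf (i)--(iii)}; its associated canonical transformation is exactly $\chi_t^{-1}$ (the Hamiltonian flow run backwards). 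Hence Theorem \ref{teo7.4} applies to this operator as well, and it gives: if $g:=S(t,0)f$ is regular in some $\Gamma'$, then $S(t,0)^{-1}g=f$ is regular in $\chi_t^{-1}(\Gamma')$. Applying this with $\Gamma'=\rdd\setminus\Lambda'$ for $\Lambda'\in\Fu(S(t,0)f)$ yields $\chi_t^{-1}(\Lambda')\in\Fu(f)$, i.e.\ $\Lambda'\in\chi_t(\Fu(f))$. Combining the two inclusions gives the equality.

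The only point that needs a little care — and is the main (minor) obstacle — is to verify that the canonical transformation attached to the backward equation $-D_tv+a^w(T-t,x,D)v=0$ is indeed $\chi_t^{-1}$, so that ``regularity of $S(t,0)f$ on $\Gamma'$'' transforms to ``regularity of $f$ on $\chi_t^{-1}(\Gamma')$'' and not on some other set. This is a direct computation with the Hamiltonian system \eqref{sistema}: the sign change in $D_t$ together with the time substitution $t\mapsto T-t$ reverses the flow, so the time-$t$ map of the new system is $(\chi_t)^{-1}$, consistent with the identity $S(t,0)^{-1}=\tilde S(T,T-t)$ and the fact that each $\chi_t$ is symplectic (hence invertible). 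Once this identification is in place, no further estimates are required: Theorem \ref{teo7.4} and Proposition \ref{pro7.6} (for the independence of $\Fu$ from the window) do all the analytic work, and the rest is set-theoretic bookkeeping with the filter structure described after Definition \ref{def7.7}.
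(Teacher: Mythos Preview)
Your proof is correct and follows essentially the same approach as the paper: one inclusion is a direct restatement of Theorem~\ref{teo7.4}, and the reverse inclusion is obtained by time reversal of the propagator (as in the proof of Corollary~\ref{gelfand2}), which replaces $\chi_t$ by $\chi_t^{-1}$. Your added remark verifying that the backward equation has canonical transformation $\chi_t^{-1}$ is a welcome clarification of a point the paper leaves implicit.
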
 

\begin{proof}
The inclusion $\chi_t(\Fu(f))\subset \Fu(S(t,0)f)$ is just a restatement of Theorem \ref{teo7.4}. The opposite inclusion is equivalent to $\chi_t^{-1}\Fu(S(t,0)f)\subset \Fu(f)$, namely to $\chi_t^{-1}\Fu(g)\subset \Fu(S(t,0)^{-1}g)$, which is true by reversing the time (cf.\ the proof of Corollary \ref{gelfand2}). 
\end{proof}

\section*{Acknowledgements}
We would like to thank the anonymous referee for suggesting several improvements.


\begin{thebibliography}{99}
\bibitem{asada-fuji}
K.~Asada and D.~Fujiwara.
\newblock On some oscillatory transformation in $L^2(\R^n)$.
\newblock {\em Japan J. Math.}, 4:299--361, 1978.


\bibitem{bertinoro3} A. B\'enyi, K. Gr\"ochenig, K. A. Okoudjou and L.G. Rogers. Unimodular Fourier multipliers
for modulation spaces. {\em J. Funct. Anal.,} 246(2):366--384, 2007.
\bibitem{bony1}
 J. Bony.
\newblock Op\'erateurs int\'egraux de {F}ourier et calcul de
              {W}eyl-{H}\"ormander (cas d'une m\'etrique symplectique),
{\it Journ\'ees ``\'{E}quations aux {D}\'eriv\'ees {P}artielles''
              ({S}aint-{J}ean-de-{M}onts, 1994)},
              \'Ecole Polytech., Palaiseau, 1--14, 1994.

\bibitem{bony2}
 J. Bony.
\newblock Weyl quantization and {F}ourier integral operators,
{\it Partial differential equations and mathematical physics
              ({C}openhagen, 1995; {L}und, 1995)}, Progr. Nonlinear Differential Equations Appl., 21, 45--57, Birkh\"auser Boston,  Boston, MA, 1996

\bibitem{bony3}
 J. Bony.
\newblock Evolution equations and generalized {F}ourier integral
              operators,
{\it Advances in phase space analysis of partial differential
              equations}, Progr. Nonlinear Differential Equations Appl., 78, 59--72, Birkh\"auser Boston Inc., Boston, MA, 2009.
              

\bibitem{CN2} M. Cappiello and F. Nicola, {\it Regularity and decay of solutions of nonlinear harmonic oscillators}, Adv. Math., \textbf{229} (2012), 1266--1299.

\bibitem{chazarain} J. Chazarain. Spectre dÕun hamiltonien quantique et m\'ecanique classique. {\it Comm. Partial Differential
Equations}, 5(6):595--644, 1980.






\bibitem{elena07}
E.~Cordero.
Gelfand-Shilov  Window Classes for Weighted Modulation Spaces. \textit{ Int. Tran. Spec. Funct.}, 18(11):809--817, 2007.

\bibitem{CNG}   E.~Cordero, K.~Gr{\"o}chenig and F. Nicola. Approximation of Fourier integral
operators by Gabor multipliers. {\it J. Fourier Anal. Appl.}, 18(4):661--684, 2012.

\bibitem{cgnr}  E. Cordero, K. Gr\"ochenig, F. Nicola and L. Rodino. Wiener algebras of Fourier integral operators. {\it J. Math. Pures Appl.}, 99(9):219--233, 2013.

\bibitem{cgnr2}  E. Cordero, K. Gr\"ochenig, F. Nicola and L. Rodino.  Generalized Metaplectic Operators and the Sch\"odinger Equation with a Potential in the Sj\"ostrand Class. {\it J. Math. Physics}, to appear. ArXiv:1306.5301.

\bibitem{cn} E. Cordero and F. Nicola. Boundedness of Schr\"odinger type propagators on modulation spaces. {\it  J. Fourier Anal. Appl}., 16(3):311--339, 2010.
\bibitem{cnr1} E. Cordero, F. Nicola and L. Rodino. Time-frequency analysis of Fourier integral operators. {\it Commun. Pure Appl. Anal}., 9(1):1--21, 2010.

\bibitem{cnr2} E. Cordero, F. Nicola and L. Rodino. Gabor representations of evolution operators. {\it Trans. Amer. Math. Soc.}, to appear. ArXiv:1209.0945.

\bibitem{cnribero} E. Cordero, F. Nicola and L. Rodino.  Exponentially sparse representations of Fourier integral operators. {\it Rev. Mat. Iberoamer.}, to appear. ArXiv:1301.1599.

\bibitem{medit}
E.~Cordero,  S.~Pilipovi\'c, L.~Rodino and N.~Teofanov.
\newblock  Localization operators and exponential weights for \modsp s.
\newblock {\em  Mediterranean  J. Math.}, 2(4):381--394, 2005.


\bibitem{fei}
 H. G. Feichtinger. Modulation spaces on locally compact abelian groups. In ``{\it Wavelets and their Applications}'', M. Krishna, R. Radhaand, S. Thangavelu Eds., Chennai, India, Allied Publishers, New Delhi, 2003, pp.
99--140. Updated version of a technical report, University of Vienna,
1983.

\bibitem{folland}
G.~B. Folland.
\newblock {\em Harmonic analysis in phase space}.
\newblock Princeton Univ. Press, Princeton, NJ, 1989.

\bibitem{fu} D. Fujiwara. A construction of the fundamental solution for the Schr\"odinger equations. {\it Proc. Japan Acad. Ser. A Math. Sci.}, 55(1):10-14, 1979.

\bibitem{GS} I. M. Gelfand and G. E. Shilov.
\newblock\textit{Generalized
Functions II, III}.
\newblock Academic Press, 1967.

\bibitem{graffi} S. Graffi and L. Zanelli. The geometric approach to the Hamilton-Jacobi equation and global parametrices for the Schr\"odinger propagator. {\it Reviews in Mathematical Physics}, 23(9):969--1008, 2011.


\bibitem{book}  K. Gr\"ochenig. {\it Foundations of time-frequency analysis}. Applied and Numerical Harmonic Analysis. Birkh\"auser Boston, Inc., Boston, MA, 2001.

\bibitem{g-ibero}  K. Gr\"ochenig. Time-frequency analysis of Sj\"ostrand's class. {\it Rev. Mat. Iberoamer.}, 22(2):703--724, 2006.

\bibitem{GR} K.~Gr{\"o}chenig and Z. Rzeszotnik. Banach algebras of
 pseudodifferential operators and their almost diagonalization.
 {\it Ann. Inst. Fourier}, 58(7):2279--2314, 2008.
 
\bibitem{GZ} K. Gr\"ochenig and G. Zimmermann. Spaces of test functions via the STFT.
{\em Journal of Function Spaces and Applications}, 2(1):25--53, 2004.


\bibitem{helffer84}
B. ~Helffer.
\newblock { T}h\'eorie Spectrale pour des Operateurs
 Globalement Elliptiques.
\newblock {\em Ast\'erisque}, Soci\'et\'e Math\'ematique
de France, 1984.
\bibitem{helffer-rob1}
B. ~Helffer and D.~Robert.
\newblock { C}omportement Asymptotique Precise du Spectre d'Operateurs Globalement Elliptiques dans $\rd$.
\newblock {\em Sem. Goulaouic-Meyer-Schwartz 1980-81}, \'Ecole Polytechnique, Expos\'e II, 1980.


\bibitem{hormanderIII} L.\ H{\"o}rmander. {\it The analysis of linear partial differential
operators, III}. Springer-Verlag, 1985.

\bibitem{hormander} L. H\"ormander. Quadratic hyperbolic operators. In ``{\it Microlocal analysis and applications} (Montecatini Terme, 1989), 118--160, Lecture Notes in Math., 1495, Springer, Berlin, 1991.

\bibitem{kki1} {K. Kato, M. Kobayashi and S. Ito}.{Representation of Schr\"odinger operator of a free particle via short time Fourier transform and its applications.} {\it Tohoku Math. J.}, 64:223--231, 2012.



\bibitem{kki4} K. Kato, M. Kobayashi and S. Ito. Estimates on modulation spaces for Schr\"odinger evolution operators with quadratic and sub-quadratic potentials. ArXiv:1212.5710.

\bibitem{kt} H. Koch and D. Tataru. Dispersive estimates for principally normal pseudodifferential operators. {\it Comm. Pure Appl. Math.}, 58(2):217--284, 2004.



\bibitem{lerner2} N. Lerner, Y. Morimoto, K. Pravda-Starov and C. J. Xu. Gelfand-Shilov smoothing properties of the radially symmetric spatially homogeneous Boltzmann equation without angular cutoff. ArXiv:1212.4712.

\bibitem{kernelGS} Z. Lozanov-Crvenkovi{\'c} and D. Peri{\v{s}}i{\'c}. Kernel theorems for the spaces of tempered ultradistributions. {\it Integral Transforms Spec. Funct.},
18(9-10):699--713, 2007.

\bibitem{marzuola} J. Marzuola, J. Metcalfe and D. Tataru.
Wave packet parametrices for evolutions governed by PDO's with rough symbols. 
{\it Proc. Amer. Math. Soc.}, 136(2):597--604, 2008.

\bibitem{rodino82} L. Rodino. Microlocal analysis for spatially inhomogeneous pseudodifferential operators. {\it Ann. Sc. Norm. Sup. Pisa, Ser IV}, 9:211--253, 1982.

\bibitem{Rodino}
M. Mascarello, and L. Rodino. {\it Partial differential equations with
multiple characteristics}, 13, Akademie Verlag, Berlin, 1997.

\bibitem{MNRTT} {A. Miyachi, F. Nicola, S. Rivetti, A. Tabacco and N. Tomita}. \ { Estimates for unimodular Fourier multipliers on modulation spaces}, {\it Proc. Amer. Math. Soc.}, 137:3869--3883, 2009.

\bibitem{mizuhara} R. Mizuhara. Microlocal smoothing effects for the Schr\"odinger evolution equation in a Gevrey class. {\it J. Math. Pures Appl.}, 91:115--136, 2009.  

\bibitem{morimoto} Y. Morimoto, K. Pravda-Starov and C. J. Xu. A remark on the ultra-analytic smoothing properties of the spatially homogeneous Landau Equation. ArXiv:1301.5566.

\bibitem{nr} F. Nicola and L. Rodino. {\it Global pseudo-differential calculus on Euclidean spaces}.  Pseudo-Differential Operators. Theory and Applications, 4. Birkh\"auser Verlag, Basel, 2010.


\bibitem{parmeggiani} A. Parmeggiani. On the parametrix for a class of Schr\"odinger operators with potentials of quadratic growth. {\it Ann. Mat. Pura Appl.}, 152(1):237--258, 1988.
 

\bibitem{rauch} J. Rauch, {\it Hyperbolic Partial Differential Equations and Geometric Optics}, GSM Series {\bf 133}, Amer. Math. Soc., 2012.


\bibitem{rochberg}
 R. Rochberg and K. Tachizawa.
 \newblock  Pseudodifferential operators, Gabor
frames, and local trigonometric bases.
\newblock In \emph{Gabor Analysis and Algorithms,}
 Appl. Numer. Harmon. Anal.,
  Birkh\"auser Boston, Boston, MA,  171--192, 1998.


 \bibitem{ruz} M. Ruzhansky, M. Sugimoto and B. Wang. Modulation spaces and nonlinear evolution equations. In {
\em Evolution Equations of Hyperbolic and Schr\"odinger Type}, 267-283, Progress in Mathematics, Vol. 301, Birkh\"auser, 2012.

\bibitem{sato69} M. Sato. Hyperfunctions and partial differential equations. {\it Proc. Int. Conf. on Funct. Analysis}, Tokyo 1969, 91--94.


\bibitem{tatarustr} D. Tataru. Strichartz estimates for second order hyperbolic operators with nonsmooth coefficients. III. {\it J. Amer. Math. Soc.}, 15(2):419--442, 2002. 

\bibitem{tataru} D. Tataru. Phase space transforms and microlocal analysis. In ``{\it Phase space analysis of partial differential equations}. Vol. II, 505--524, Pubbl. Cent. Ric. Mat. Ennio Giorgi, Scuola Norm. Sup., Pisa, 2004.





\bibitem{treves} F. Treves. Parametrices for a class of Schr\"odinger equations. {\it Commun. Pure Appl. Math.}, 48(1):13--78, 1995.



\bibitem{bertinoro57} B. Wang, Z. Lifeng and G. Boling. Isometric decomposition operators, function spaces $E^\lambda_{
p,q}$ and applications to nonlinear evolution equations. {\em J. Funct. Anal.,} 233(1):1--39, 2006.

\bibitem{bertinoro58} B. Wang and C. Huang. Frequency-uniform decomposition method for the generalized BO,
KdV and NLS equations. {\em J. Differential Equations,} 239(1):213--250, 2007.

\bibitem{bertinoro58bis} B. Wang and H. Hudzik. The global Cauchy problem for the NLS and NLKG with small rough data. {\em J. Differential Equations,} 231:36--73, 2007.


\bibitem{baoxiang} B. Wang, Z. Huo, C. Hao and Z. Guo. {\it Harmonic analysis method for nonlinear evolution equations}. I. World Scientific Publishing Co. Pte. Ltd., Hackensack, NJ, 2011.


\end{thebibliography}
\end{document}